\documentclass[final]{svmult}

\usepackage{mathptmx}
\usepackage{helvet}
\usepackage{courier}
\usepackage{graphicx}
\usepackage{makeidx}
\usepackage{multicol}
\usepackage{footmisc}

\usepackage{amsmath,amscd}
\usepackage{amssymb}
\usepackage{mathrsfs}
\usepackage{fixme}
\usepackage{pgf}
\usepackage{tikz}
\usetikzlibrary{arrows}
\usetikzlibrary{positioning}
\usetikzlibrary{shapes.geometric}
\usetikzlibrary{calc}

\newcommand{\RJXd}[1]{\mathsf{X}_{\textrm{diag}(#1) }}

\DeclareMathOperator{\RJId}{Id}
\DeclareMathOperator{\RJrl}{rl}
\DeclareMathOperator{\RJleftl}{ll}
\DeclareMathOperator{\RJBF}{BF}
\DeclareMathOperator{\RJdiag}{diag}
\DeclareMathOperator{\RJsgn}{sgn}

\makeindex

\begin{document}

\title*{Is Every Irreducible Shift of Finite Type Flow Equivalent to a Renewal System?}
\author{Rune Johansen}
\institute{Rune Johansen \at Department of Mathematical Sciences, University of Copenhagen, Universitetsparken 5, 2100 K\o benhavn \O, \email{rune@math.ku.dk}} \maketitle

\abstract*{
Is every irreducible shift of finite type flow equivalent to a renewal system? For the first time, this variation of a classic problem formulated by Adler is investigated, and several partial results are obtained in an attempt to find the range of the Bowen--Franks invariant over the set of renewal systems of finite type. In particular, it is shown that the Bowen--Franks group is cyclic for every member of a class of renewal systems known to attain all entropies realised by shifts of finite type, and several classes of renewal systems with non--trivial values of the invariant are constructed.
}

A renewal system is a shift space consisting of the biinfinite sequences that can be obtained as free concatenations of words from some finite generating list. This simple definition hides a surprisingly rich structure that is in many ways independent of the usual topological and dynamical structure of the shift space. 
The present work was motivated by the following problem raised by Adler: Is every irreducible shift of finite type conjugate to a renewal system? Several attempts have been made to answer this question, and the conjugacy of certain special classes of renewal systems is well understood, but there exist only a few results concerning the general problem.
This work is the first to investigate the corresponding question for flow equivalence. The aim has been to find the range of the Bowen--Franks invariant over renewal systems of finite type, and several classes of renewal systems displaying a wide range of values of the invariant are constructed, but it remains unknown whether renewal systems can attain all the values attained by irreducible shifts of finite type. 

Section \ref{RJ_sec_introduction} gives an introduction to shift spaces and renewal systems. Section \ref{RJ_sec_rs_lfc} concerns the left Fischer covers of renewal systems and gives conditions under which the Fischer covers of complicated renewal systems can be constructed from simpler building blocks with known presentations. Section \ref{RJ_sec_rs_entropy} gives a flow classification of a class of renewal systems introduced in \cite{RJ_hong_shin}, while Sec.\ \ref{RJ_sec_rs_range} uses the results of the previous two sections to construct classes of renewal systems with interesting values of the Bowen--Franks invariant.

\emph{Acknowledgements}: Supported by \textsc{VILLUM FONDEN} through the experimental mathematics network at the University of Copenhagen. Supported by the Danish National Research Foundation through the Centre for Symmetry and Deformation (DNRF92).  

\section{Introduction}
\label{RJ_sec_introduction}
\index{shift space}
\index{full shift}
\index{shift map}
\index{shift of finite type}
\index{word}
\index{SFT|see{shift of finite type}} 
Here, a short introduction to the basic definitions and properties of shift
spaces is given to make the present paper self--contained. 
For a thorough treatment of shift spaces see \cite{RJ_lind_marcus}. 
Let $\mathcal{A}$ be a finite set with the discrete topology. The
\emph{full shift} over $\mathcal{A}$ consists of the space $\mathcal{A}^\mathbb{Z}$ endowed
with the product topology and the \emph{shift map} $\sigma \colon
\mathcal{A}^\mathbb{Z} \to \mathcal{A}^\mathbb{Z}$ defined  by $\sigma(x)_i = x_{i+1}$ for all $i \in
\mathbb{Z}$. Let $\mathcal{A}^*$ be the collection of finite words (also known as
blocks) over $\mathcal{A}$.
For $w \in \mathcal{A}^*$, $\vert w \vert$ will denote the length of $w$.
 A
subset $X \subseteq \mathcal{A}^\mathbb{Z}$ is called a \emph{shift space} if it
is invariant under the shift map and closed. 
For each $\mathcal{F} \subseteq \mathcal{A}^*$, define  $\mathsf{X}_\mathcal{F}$ to be the set of
bi--infinite sequences in $\mathcal{A}^\mathbb{Z}$ which do not contain any of the
\emph{forbidden words} from $\mathcal{F}$. 
A subset $X \subseteq \mathcal{A}^\mathbb{Z}$ is a shift space if and only if there
exists $\mathcal{F} \subseteq \mathcal{A}^*$ such that $X = \mathsf{X}_\mathcal{F}$
(cf. \cite[Proposition 1.3.4]{RJ_lind_marcus}). $X$ is said to be 
a \emph{shift of finite type} (SFT) if this is possible for a finite
set $\mathcal{F}$.

\index{language}
The \emph{language} of a shift space $X$ is denoted $\mathcal{B}(X)$ and it is defined to be the set of all words which occur in
at least one $x \in X$. The shift space $X$ is said to be
\emph{irreducible} if there for every $u,w \in \mathcal{B}(X)$ exists $v \in
\mathcal{B}(X)$ such that $uvw \in \mathcal{B}(X)$. 
For each $x \in X$ define the \emph{left--ray} of $x$ to be $x^- = \cdots
x_{-2} x_{-1}$ and define the \emph{right--ray} of $x$ to be $x^+ = x_0
x_1 x_2 \cdots$. The sets of all left--rays and all right--rays are,
respectively, denoted  $X^-$ and $X^+$. Given a word or ray $x$, $\RJrl(x)$ and $\RJleftl(x)$ will denote respectively the right--most and the left--most letter of $x$.

\index{graph}
\index{path}
\index{edge shift}
\index{irreducible}
A \emph{directed graph} is a quadruple $E = (E^0,E^1,r,s)$ consisting of countable sets $E^0$ and $E^1$, and maps $r,s \colon E^1 \to E^0$.
A \emph{path} $\lambda = e_1 \cdots
e_n$ is a sequence of edges such that $r(e_i) = s(e_{i+1})$ for all $i
\in \{1, \ldots n-1 \}$. The vertices in $E^0$ are considered to be
paths of length $0$.
For each $n \in \mathbb{N}_0$, the set of paths of length $n$ is denoted
$E^n$, and the set of all finite paths is denoted $E^*$.
Extend
the maps $r$ and $s$ to $E^*$ by defining $s(e_1 \cdots e_n) = s(e_1)$
and $r(e_1 \cdots e_n) = r(e_n)$. A directed graph $E$ is said to be
\emph{irreducible} (or transitive) if there for each pair of vertices
$u,v \in E^0$ exists a path $\lambda \in E^*$ with $s(\lambda) = u$
and $r(\lambda) = v$. For a directed graph $E$, the 
\emph{edge shift} $(\mathsf{X}_E, \sigma_E)$ is defined by
$
  \mathsf{X}_E = \left\{ x \in (E^1)^\mathbb{Z} \mid r(x_i) = s(x_{i+1}) \textrm{ for all }
  i \in \mathbb{Z} \right\}$.

\index{conjugacy}
\index{flow equivalence}
\index{Bowen--Franks group}
\index{BF@$\RJBF$|see{Bowen--Franks group}}
\index{Bowen--Franks invariant}
A bijective, continuous and shift commuting map between two shift
spaces is called a \emph{conjugacy}, and when such a map exists, 
the two shift spaces are said to be \emph{conjugate}. \emph{Flow
  equivalence} is a weaker equivalence relation generated by conjugacy
and \emph{symbol expansion} \cite{RJ_parry_sullivan}.
Let $A$ be the adjacency matrix of a directed graph $E$, then $\RJBF(A) = \mathbb{Z}^n / \mathbb{Z}^n (\RJId - A)$ is called the \emph{Bowen--Franks group} of $A$ and it is an invariant of conjugacy of edge shifts. Let $E$ and $F$ be finite directed graphs for which the edge shifts $\mathsf{X}_E$ and $\mathsf{X}_F$ are irreducible and not flow equivalent to the trivial shift with one element, and let $A_E$ and $A_F$ be the corresponding adjacency matrices. Then $\mathsf{X}_E$ and $\mathsf{X}_F$ are flow equivalent if and only $\RJBF(A_E) \simeq \RJBF(A_F)$ and the signs $\RJsgn \det A_E$ and $\RJsgn \det A_F$ are equal \cite{RJ_franks}. Every SFT is conjugate to an edge shift, so this gives a complete flow equivalence invariant of irreducible SFTs. The pair consisting of the Bowen--Franks group and the sign of the determinant is called the \emph{signed Bowen--Franks group}, and it is denoted $\RJBF_+$. This invariant is easy to compute and easy to compare which makes it appealing to consider flow equivalence rather than conjugacy.

\index{graph!labelled}
\index{shift space!presentation of}
\index{presentation}
\index{presentation!follower separated}
A \emph{labelled graph} $(E, \mathcal{L})$ over an alphabet $\mathcal{A}$ consists
of a directed graph $E$ and a surjective labelling map $\mathcal{L} \colon E^1
\to \mathcal{A}$.
Given a labelled graph $(E, \mathcal{L})$, define the shift space $(\mathsf{X}_{(E, \mathcal{L})}, \sigma)$ by setting 
$\mathsf{X}_{(E, \mathcal{L})} = \left\{ \left( \mathcal{L}(x_i) \right)_i \in \mathcal{A}^\mathbb{Z} \mid 
                          x \in \mathsf{X}_E  \right\}$,
The
labelled graph $(E, \mathcal{L})$ is said to be a \emph{presentation} of the
shift space $\mathsf{X}_{(E, \mathcal{L})}$, and a \emph{representative} of a word $w \in
\mathcal{B}(\mathsf{X}_{(E, \mathcal{L})}) $ is a path $\lambda \in E^*$ such that
$\mathcal{L}(\lambda) = w$ with the natural extension of $\mathcal{L}$. Representatives of rays are defined analogously.
Let $(E, \mathcal{L})$ be a labelled graph presenting $X$. For
each $v \in E^0$, define the \emph{predecessor set} of $v$ to be the
set of left--rays in $X$ which have a presentation terminating at $v$. This
is denoted $P_\infty^E(v)$, or just $P_\infty(v)$ when $(E, \mathcal{L})$ is understood from the context.
The presentation $(E, \mathcal{L})$ is said to be \emph{predecessor--separated} if $P_\infty^E(u) \neq P_\infty^E(v)$ when $u,v \in E^0$ and $u \neq v$.

A function $\pi \colon X_1 \to X_2$ between shift spaces $X_1$ and
$X_2$ is said to be a \emph{factor map} if it is continuous,
surjective, and shift commuting. A shift space is
called \emph{sofic} \cite{RJ_weiss} if it is the image of an SFT under a factor
map.
Every SFT is sofic, and a sofic shift
which is not an SFT is called \emph{strictly sofic}.
Fischer proved that a shift space is sofic if and only 
if it can be presented by a finite labelled graph \cite{RJ_fischer}. 
A sofic shift
space is irreducible if and only if it can be presented by an
irreducible labelled graph (see \cite[Sec.\ 3.1]{RJ_lind_marcus}).

Let $(E, \mathcal{L})$ be a finite labelled graph which presents the sofic
shift space $\mathsf{X}_{(E, \mathcal{L})}$, and let $\pi_\mathcal{L} \colon \mathsf{X}_E \to \mathsf{X}_{(E,
  \mathcal{L})}$ be the factor map induced by the labelling map $\mathcal{L} \colon
E^1 \to \mathcal{A}$, then the SFT $\mathsf{X}_E$ is called a \emph{cover} of the
sofic shift $\mathsf{X}_{(E, \mathcal{L})}$, and $\pi_\mathcal{L}$ is called the covering map. 

Let $X$ be a shift space over an alphabet $\mathcal{A}$. A presentation $(E,\mathcal{L})$ of $X$ is said to be \emph{left--resolving}
if no vertex in $E^0$ receives two edges with the same label. 
Fischer proved \cite{RJ_fischer} that up to labelled graph
isomorphism every irreducible sofic shift has a unique left--resolving
presentation with fewer vertices than any other left--resolving
presentation. This is called the \emph{left Fischer cover} of $X$,
and it is denoted $(F, \mathcal{L}_F)$. 

\index{predecessor set}
\index{follower set}
\index{intrinsically synchronising}
For $x^+ \in X^+$, define the \emph{predecessor set} of $x^+$ to be
the set of left--rays which may  precede $x^+$ in $X$, that is
$P_\infty(x^+) = \{ y^- \in X^- \mid y^- x^+ \in X \}$ 
(see \cite[Secs. I and III]{RJ_jonoska_marcus} and \cite[Exercise
    3.2.8]{RJ_lind_marcus} for details). The \emph{follower set} of a left--ray $x^- \in X^-$ is defined analogously. 
The \emph{left Krieger cover} of the sofic shift space $X$ is the labelled graph
$(K, \mathcal{L}_K)$ where $K^0 = \{ P_\infty(x^+) \mid x^+ \in X^+\}$,
and where there is an edge labelled $a \in \mathcal{A}$ from $P \in K^0$ to
$P' \in K^0$ if and only if there exists $x^+ \in X^+$ such that $P
= P_\infty(a x^+)$ and $P' = P_\infty(x^+)$.
A word $v  \in \mathcal B(X)$ is said to be \emph{intrinsically synchronising} if $uvw \in \mathcal B(X)$ whenever $u$ and $w$ are words such that $uv, vw \in \mathcal B(X )$.
A ray is said to be \emph{intrinsically synchronising} if it contains an intrinsically synchronising word as a factor.
If a right--ray $x^+$ is intrinsically synchronising, then there is precisely one vertex in the left Fischer cover where a presentation of $x^+$ can start, and this vertex can be identified with the predecessor set $P_\infty(x^+)$ as a vertex in the Krieger cover. In this way, the left Fischer cover can be identified with the irreducible component of the left Krieger cover generated by the vertices that are predecessor sets of intrinsically synchronising right--rays \cite[Lemma 2.7]{RJ_krieger_sofic_I}, \cite[Exercise 3.3.4]{RJ_lind_marcus}. The interplay between the structure of the Fischer and Krieger covers is examined in detail in \cite{RJ_johansen_structure}.

\index{renewal system}
\index{renewal system!generating list of|see{generating list}}
\index{generating list}\index{XL@$\mathsf{X}(L)$}
\index{renewal system!loop graph of}
\index{flower automata}\index{loop system}
Let $\mathcal{A}$ be an alphabet, let $L \subseteq \mathcal{A}^*$ be a finite list of words over $\mathcal{A}$, and define $\mathcal{B}(L)$ to be the set of factors of elements of $L^*$. Then $\mathcal{B}(L)$ is the language of a shift space $\mathsf{X}(L)$ which is said to be the \emph{renewal system}
generated by $L$. $L$ is said to be the \emph{generating list} of $\mathsf{X}(L)$. 
A renewal system is an irreducible sofic shift since it can be presented by the labelled graph obtained by writing the generating words on loops starting and ending at a common vertex. 
This graph is called the \emph{standard loop graph presentation} of $\mathsf{X}(L)$, and because of this presentation, renewal systems are called \emph{loop systems} or \emph{flower automata} in automata theory (e.g.\  \cite{RJ_berstel_perrin}). 

\index{Adler's problem}
\index{Adler's problem!flow equivalence version}
Simple examples show that not every sofic shift---or every SFT---is a renewal system \cite[pp.\ 433]{RJ_lind_marcus}, and these results naturally raise the following question, which was first asked by Adler:
Is every irreducible shift of finite type conjugate to a renewal system?
This question has been the motivation of most of the work done on renewal systems \cite{RJ_goldberger_lind_smorodinsky,RJ_hong_shin_cyclic,RJ_hong_shin,RJ_johnson_madden,RJ_restivo, RJ_restivo_note,RJ_williams_rs}.
The analogous question for sofic shifts has a negative answer \cite{RJ_williams_rs}. The aim of the present work has been to answer another natural variation of Adler's question: Is every irreducible SFT \emph{flow equivalent} to a renewal system? 
To answer this question, it is sufficient to find the range of the Bowen--Franks invariant over the set of SFT renewal systems and check whether it is equal to the range over the set of irreducible SFTs.
It is easy to check that a group $G$ is the Bowen--Franks group of an irreducible SFT if and only if it is a finitely generated abelian group and that any combination of sign and Bowen--Franks group can be achieved by the Bowen--Franks invariant.
Hence, the overall strategy of the investigation of the flow equivalence question has been to attempt to construct all these combinations of groups and signs. 
However, it is difficult to construct renewal systems attaining many of the values of the invariant. In fact, it is non--trivial to construct an SFT renewal system that is not flow equivalent to a full shift \cite{RJ_johansen_thesis}.

\section{Fischer covers of renewal systems}
\label{RJ_sec_rs_lfc}
In the attempt to find the range of the Bowen--Franks invariant over the set of SFT renewal systems, it is useful to be able to construct complicated renewal systems from simpler building blocks, but in general, it is non--trivial to study the structure of the renewal system $\mathsf{X}(L_1 \cup L_2)$ even if the renewal systems $\mathsf{X}(L_1)$ and $\mathsf{X}(L_2)$ are well understood.
The goal of this section is to describe the structure of the left Fischer covers of renewal systems in order to give conditions under which the Fischer cover of $\mathsf{X}(L_1 \cup L_2)$ can be constructed when the Fischer covers of $\mathsf{X}(L_1)$ and $\mathsf{X}(L_2)$ are known.

\index{renewal system!Fischer cover of!$P_0(L)$}\index{P0@$P_0(L)$}
\index{renewal system!Fischer cover of!construction} 
\label{RJ_sec_rs_lfc_construction}

Let $L$ be a generating list and define 
$  P_0(L) = \left\{  \ldots w_{-2} w_{-1} w_{0}  \mid w_i \in L  \right \} \subseteq \mathsf{X}(L)^-$.
$P_0(L)$ is the predecessor set of the central vertex in the standard loop graph of $\mathsf{X}(L)$, but it is not necessarily the predecessor set of a right--ray in $\mathsf{X}(L)^+$, so it does not necessarily correspond to a vertex in the left Fischer cover of $\mathsf{X}(L)$. If $p \in \mathcal{B}(\mathsf{X}(L))$ is a prefix of some word in $L$, define
$	P_0(L)p = \left\{  \ldots w_{-2} w_{-1} w_{0} p  \mid w_i \in L  \right \} \subseteq \mathsf{X}(L)^-$.



\index{partitioning}\index{partitioning!beginning of}\index{partitioning!end of}
Let $L$ be a generating list.
A triple $(n_b,g,l)$ where $n_b,l \in \mathbb{N}$ and $g$ is an ordered list of words $g_1, \ldots , g_k \in L$ with $\sum_{i=1}^k \lvert g_i \rvert \geq  n_b+l-1$ is said to be a \emph{partitioning} of the factor $v_{[n_b,n_b+l-1]} \in \mathcal{B}(\mathsf{X}(L))$ of $v = g_1 \cdots g_k$. The \emph{beginning} of the partitioning is the word $v_{[1,n_b-1]}$, and the \emph{end} is the word $v_{[n_b+l,\lvert v \rvert]}$.
A partitioning of a right--ray $x^+ \in \mathsf{X}(L)^+$ is a pair $p = (n_b,(g_i)_{i\in\mathbb{N}})$ where $n_b \in \mathbb{N}$ and $g_i \in L$ such that $wx^+ = g_1 g_2 \cdots$ when $w$ is the \emph{beginning} consisting of the $n_b-1$ first letters of the concatenation $g_1 g_2 \cdots$.
Partitionings of left--rays are defined analogously.

\index{bordering}
\index{bordering!strongly}
\index{word!bordering}
\index{word!bordering!strongly}
\index{generating list!irreducible}
Let $L \subseteq \mathcal{A}^*$ be a finite list, and let $w \in \mathcal{B}(\mathsf{X}(L)) \cup \mathsf{X}(L)^+$ be an allowed word or right--ray. Then $w$ is said to be \emph{left--bordering} if there exists a partitioning of $w$ with empty beginning, and \emph{strongly left--bordering} if every partitioning of $w$ has empty beginning. Right--bordering words and left--rays are defined analogously.


\begin{definition}	\label{RJ_def_border_point}
\index{border point}\index{border point!universal}
\index{border point!generator of}
Let $L \subseteq \mathcal{A}^*$ be finite, and let $(F, \mathcal{L}_F)$ be the left Fischer cover of $\mathsf{X}(L)$. A vertex $P \in F^0$ is said to be a \emph{(universal) border point} for $L$ if there exists a (strongly) left--bordering $x^+ \in X^+$ such that $P = P_\infty(x^+)$. An intrinsically synchronizing word $w \in L^*$ is said to be a \emph{generator} of the border point $P_\infty(w) = P_\infty(w^\infty)$, and it is said to be a \emph{minimal generator} of $P$ if no prefix of $w$ is a generator of $P$.
\end{definition}

\index{renewal system!border point of}
The border points add information to the Fischer cover about the structure of the generating lists, and this information will be useful for studying $\mathsf{X}(L_1 \cup L_2)$ when the Fischer covers of $\mathsf{X}(L_1)$ and $\mathsf{X}(L_2)$ are known. If $P$ is a (universal) border point of $L$ and there is no ambiguity about which list is generating $X = \mathsf{X}(L)$, then the terminology will be abused slightly by saying that $P$ is a (universal) border point of $X$ or simply of the left Fischer cover.

\begin{lemma}
\label{RJ_lem_border_point_general_prop}
Let $L$ be a finite list generating a renewal system with left Fischer cover $(F, \mathcal{L}_F)$.
\begin{enumerate}
\item \label{RJ_lem_bp_subset}
If $P \in F^0$ is a border point, then $P_0(L) \subseteq P$, and if $P$ is universal then $P = P_0(L)$.
\item \label{RJ_lem_bp_path_to}
If $P_1,P_2 \in F^0$ are border points and if $w_1 \in L^*$ is a generator of $P_1$, then there exists a path with label $w_1$ from $P_1$ to $P_2$. 
\item \label{RJ_lem_bp_path_implies}
If $P_1 \in F^0$ is a border point and $w \in L^*$, then there exists a unique border point $P_2 \in F^0$ with a path labelled $w$ from $P_2$ to $P_1$.
\item 
If $\mathsf{X}(L)$ is an SFT, then every border point of $L$ has a generator.
\item \label{RJ_lem_bp_strongly_right_implies}
If $L$ has a strongly right--bordering word $w$, then $x^+ \in \mathsf{X}(L)^+$ is left--bordering if and only if $P_\infty(x^+)$ is a border point.
\end{enumerate}
\end{lemma}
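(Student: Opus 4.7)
The proofs all move between global list-partitionings of points in $\mathsf{X}(L)$ and local partitionings of rays, combined with the standard dictionary relating predecessor sets to paths in the Fischer cover.

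For (1), if $x^+ = g_1 g_2 \cdots$ witnesses $P = P_\infty(x^+)$ as a border point, then for any $\alpha = \cdots w_{-1} w_0 \in P_0(L)$ the sequence $\alpha x^+$ is itself a list-concatenation, hence lies in $\mathsf{X}(L)$, giving $P_0(L) \subseteq P$. For the universal case I would take $y^- \in P$ and any global partitioning of $y^- x^+ \in \mathsf{X}(L)$; its restriction to positions $\geq 1$ is a partitioning of $x^+$, whose beginning must be empty by strong left-borderedness, placing a list boundary at position $0$ and hence $y^- \in P_0(L)$.

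For (2), the generator $w_1 \in L^*$ is intrinsically synchronising with $P_\infty(w_1) = P_1$. I would fix an intrinsically synchronising $z^+$ with $P_\infty(z^+) = P_2$ (available because $P_2$ is a Fischer-cover vertex); since $P_0(L) \subseteq P_2$ and $w_1 \in L^*$, the ray $w_1 z^+$ lies in $\mathsf{X}(L)^+$, and intrinsic synchronisation of $w_1$ yields $P_\infty(w_1 z^+) = P_\infty(w_1) = P_1$. The Fischer-cover presentation of $w_1 z^+$ then starts at $P_1$ and reads $w_1$ into $P_\infty(z^+) = P_2$. Part (3) is dual: left-resolvingness makes the source $P_2$ of the path labeled $w$ ending at $P_1$ unique, and taking both a left-bordering witness $y^+$ and an intrinsically synchronising witness $z^+$ with $P_\infty(y^+) = P_\infty(z^+) = P_1$ gives $P_\infty(wy^+) = P_\infty(wz^+)$ (the predecessor set of $w\xi^+$ is determined by $P_\infty(\xi^+)$ alone), so this common vertex is simultaneously a Fischer-cover vertex (via $z^+$) and a border point (via the left-bordering $wy^+$).

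For (4), I would invoke the standard SFT fact that there exists $M$ such that every word in $\mathcal{B}(\mathsf{X}(L))$ of length $\geq M$ is intrinsically synchronising; applying this to $W = g_1 \cdots g_k$ for $k$ large, where $x^+ = g_1 g_2 \cdots$ witnesses $P$ as a border point, produces $W \in L^*$ intrinsically synchronising with $P_\infty(W) = P_\infty(W^\infty) = P$, making $W$ a generator. For (5), only the nontrivial direction needs argument: any partitioning of the strongly right-bordering word $w$ exhibits it as a suffix of some $g_1 \cdots g_k \in L^*$, so for any $l \in L$ the left-ray $\alpha = \cdots l\, l\, g_1 \cdots g_k$ lies in $P_0(L) \subseteq P_\infty(x^+)$ by (1) and $\alpha x^+ \in \mathsf{X}(L)$ admits a global partitioning; strong right-borderedness applied at the occurrence of $w$ ending at position $0$ of $\alpha$ forces a list boundary there, and restriction to positions $\geq 1$ produces a partitioning of $x^+$ with empty beginning. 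The main delicate point throughout is tracking which witness rays a given border point admits (left-bordering versus intrinsically synchronising) and freely swapping between them whenever only the predecessor set matters.
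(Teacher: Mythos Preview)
Your proof is correct and follows essentially the same approach as the paper's, though you are considerably more careful about details the paper leaves implicit. Two small differences worth noting: in (2) the paper takes a \emph{left-bordering} witness $x^+$ for $P_2$ rather than an intrinsically synchronising one, which gives $w_1 x^+ \in \mathsf{X}(L)^+$ in one step since $w_1 \in L^*$; your route via $P_0(L) \subseteq P_2$ works but is slightly longer. In (3) and (5), on the other hand, you spell out points the paper glosses over---uniqueness via left-resolvingness, the two-witness trick to see that $P_2$ is simultaneously a Fischer-cover vertex and a border point, and the explicit reason $wx^+ \in \mathsf{X}(L)^+$ in (5)---so your version is strictly more complete there.
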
 

\begin{proof} 
1.
Choose a left--bordering $x^+ \in \mathsf{X}(L)^+$ such that $P = P_\infty(x^+)$ and note that $y^-x^+ \in \mathsf{X}(L)$ for each $y^- \in P_0(L)$.
2.
Choose a left--bordering $x^+ \in \mathsf{X}(L)^+$ such that $P_2 = P_\infty(x^+)$. Then $P_\infty(w_1 x^+) = P_1$ since $w_1 x^+ \in \mathsf{X}(L)^+$ and  $w_1$ is intrinsically synchronizing, so there is a path labelled $w_1$ from $P_1$ to $P_2$.
3.
Choose a left--bordering $x^+ \in \mathsf{X}(L)^+$ such that $P = P_\infty(x^+)$. Since $w \in L^*$, the right--ray $w x^+$ is also left--bordering.
4.
Let $P = P_\infty(x^+)$ for some left--bordering $x^+ \in \mathsf{X}(L)^+$, and choose an intrinsically synchronizing prefix $w \in L^*$ of $x^+$. Then $P_\infty(x^+) = P_\infty(w)$, so $w$ is a generator of $P$.
5. 
If $P_\infty(x^+)$ is a border point, then $w x^+ \in \mathsf{X}(L)^+$, so $x^+$ must be left--bordering. The other implication holds by definition.
\end{proof}

\noindent
In particular, the universal border point is unique when it exists.
A predecessor set $P_\infty(x^+)$ can be a border point even though $x^+$ is not left--bordering 

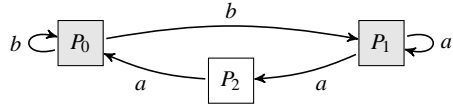
\begin{figure}[htbp]
\begin{tikzpicture}
  [bend angle=10,
   clearRound/.style = {circle, inner sep = 0pt, minimum size = 17mm},
   clear/.style = {rectangle, minimum width = 5 mm, minimum height = 5 mm, inner sep = 0pt},  
   greyRound/.style = {circle, draw, minimum size = 1 mm, inner sep =
      0pt, fill=black!10},
   grey/.style = {rectangle, draw, minimum size = 6 mm, inner sep =
      1pt, fill=black!10},
    white/.style = {rectangle, draw, minimum size = 6 mm, inner sep =
      1pt},
   to/.style = {->, shorten <= 1 pt, >=stealth', semithick}]
  
  \node[grey] (P0) at (0,0) {$P_0$};
  \node[grey] (P1) at (4,0) {$P_1$};
  \node[white] (P2) at (2,-0.5) {$P_2$};
  \node (space) at (6,0) {};
  
  \draw[to,loop left] (P0) to node[auto] {$b$} (P0);
  \draw[to,bend left] (P0) to node[auto] {$b$} (P1);
  \draw[to,bend left] (P1) to node[auto] {$a$} (P2);
  \draw[to,bend left] (P2) to node[auto] {$a$} (P0);
  \draw[to,loop right] (P1) to node[auto] {$a$} (P1);

\end{tikzpicture}
\hspace{\stretch{1}}
\sidecaption
\caption[Border points.]{Left Fischer cover of the SFT renewal system $\mathsf{X}(L)$ generated by $L = \{ aa, aaa, b \}$ discussed in Example \ref{RJ_ex_border_points}. The border points are coloured grey.
} 
\label{RJ_fig_aa_aaa_b}
\end{figure}

\begin{example} \label{RJ_ex_border_points}
Consider the list $L = \{ aa, aaa, b\}$ and the renewal system $\mathsf{X}(L)$. It is straightforward to check that $\mathsf{X}(L) = \mathsf{X}_\mathcal{F}$ for the set of forbidden words $\mathcal{F} = \{ bab \}$, so this is an SFT. For this shift, there are three distinct predecessor sets:
\begin{align*}
P_0 &= P_\infty(b\cdots) = \{ \cdots x_{-1}x_{0}  \in \mathsf{X}(L)^- \mid x_0 = b \textnormal{ or }  x_{-1}x_{0} = aa \}, \\
P_1 &= P_\infty(a^nb\cdots) = P_\infty(a^\infty) =  \mathsf{X}(L)^- , \qquad n \geq 2, \\
P_2 &= P_\infty(ab\cdots) = \{ \cdots x_{-1}x_{0} \in \mathsf{X}(L)^- \mid x_{0} = a  \}.
\end{align*}
The information contained in these equations is sufficient to draw the left Krieger cover, and each set is the predecessor set of an intrinsically synchronising right--ray, so the left Fischer cover can be identified with the left Krieger cover. This graph is shown in Fig.\ \ref{RJ_fig_aa_aaa_b}.
Here, $P_0$ is a universal border point because any right--ray starting with a $b$ is strongly left bordering. The generating word $b$ is a minimal generator of $P_0$. The vertex $P_1$ is a border point because $a^nb\cdots$ is left bordering for all $n \geq 2$.  
The word $aa$ is a minimal generator of $P_1$, and $aab$ is a non--minimal generator. The vertex $P_2$ is not a border point since there is no infinite concatenation $x^+$ of words from $L$ such that $x^+= ab\cdots$.  
Another way to see this is to note that every path terminating at $P_2$ has $a$ as a suffix, so that $P_0$ is not a subset of $P_2$ which together with Lemma \ref{RJ_lem_border_point_general_prop}(\ref{RJ_lem_bp_subset}) implies that $P_2$ is not a border point.
Note also that Lemma \ref{RJ_lem_border_point_general_prop}(\ref{RJ_lem_bp_path_to}) means that there must be paths labelled $b$ from $P_0$ to the two border points, and similarly, paths labelled $aa$ and $aab$ from $P_1$ to the two border points.
\end{example}


\index{renewal system!addition of}
Consider two renewal systems $\mathsf{X}(L_1)$ and $\mathsf{X}(L_2)$. The \emph{sum} $\mathsf{X}(L_1) + \mathsf{X}(L_2)$ is the renewal system $\mathsf{X}(L_1 \cup L_2)$.
Generally, it is non--trivial to construct the Fischer cover of such a sum even if the Fischer covers of the summands are known.

%


\index{generating list!modular}
\begin{definition}
Let $L$ be a generating list with universal border point $P_0$ and let $(F, \mathcal{L}_F)$ be the left Fischer cover of $\mathsf{X}(L)$.  
$L$ is said to be \emph{left--modular} if for all $\lambda \in F^*$ with 
$r(\lambda) = P_0$, $\mathcal{L}_F(\lambda) \in L^*$ if and only if $s(\lambda)$ is a border point. \emph{Right--modular} generating lists are defined analogously.
\end{definition}

\index{renewal system!modular}
\noindent
It is straightforward to check that the list considered in Example \ref{RJ_ex_border_points} is left--modular.
When $L$ is left--modular and there is no doubt about which generating list is used, the renewal system $\mathsf{X}(L)$ will also be said to be \emph{left--modular}.

\begin{lemma} \label{RJ_lem_strongly_bordering_implies_modular}
If $L$ is a generating list with a strongly left--bordering word $w_l$ and a strongly right--bordering word $w_r$, then it is both left-- and right--modular.
\end{lemma}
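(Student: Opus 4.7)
My plan is to prove left--modularity directly and then obtain right--modularity by the evident symmetry of reversing every word, which swaps the roles of $w_l$ and $w_r$. First I would verify that the universal border point $P_0$ even exists, which is what makes the condition meaningful: any right--ray of the form $w_l x^+ \in \mathsf{X}(L)^+$ is strongly left--bordering, because every partitioning $(n_b,(g_i))$ of the right--ray induces a partitioning of the prefix $w_l$ with the same $n_b$, forcing $n_b = 1$ by the hypothesis on $w_l$.

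The implication $\mathcal{L}_F(\lambda) \in L^* \Rightarrow s(\lambda)$ is a border point follows from Lemma \ref{RJ_lem_border_point_general_prop}(\ref{RJ_lem_bp_path_implies}) applied with $P_1 = P_0$ and $w = \mathcal{L}_F(\lambda)$: it yields a border point $P_2$ with a path labelled $\mathcal{L}_F(\lambda)$ terminating at $P_0$, and left--resolvingness of $F$ forces this path to coincide with $\lambda$, so $s(\lambda)=P_2$ is a border point.

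The substantive direction is the converse. Set $\alpha = \mathcal{L}_F(\lambda)$ and $P_1 = s(\lambda)$, a border point; I need $\alpha \in L^*$. Since $w_r \in \mathcal{B}(\mathsf{X}(L))$ and every partitioning of $w_r$ has empty end, $w_r$ appears as a suffix of some $v \in L^*$, so prepending generators indefinitely produces a left--ray $y^- \in P_0(L)$ ending in $w_r$. Lemma \ref{RJ_lem_border_point_general_prop}(\ref{RJ_lem_bp_subset}) gives $P_0(L) \subseteq P_1$, so $y^- \in P_1$, and reading $\lambda$ places $y^-\alpha$ at $P_0$, giving $y^-\alpha \in P_0 = P_0(L)$. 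Write $y^-\alpha = \cdots u_{-1} u_0$ with $u_i \in L$, and consider the occurrence of $w_r$ at positions $-|\alpha|-|w_r|+1, \ldots, -|\alpha|$ of $y^-\alpha$. Take for $g_1, \ldots, g_k$ precisely those $u_i$ that overlap this occurrence of $w_r$. The strong right--borderingness of $w_r$ forces this partitioning to have empty end, so $w_r$ ends flush with the right edge of the rightmost generator $u_{-j_2}$ meeting it; since the last letter of $w_r$ sits at position $-|\alpha|$, the generator $u_{-j_2}$ must itself end at position $-|\alpha|$, so $\alpha = u_{-j_2+1}\cdots u_0 \in L^*$.

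The step I expect to be most delicate is the final partitioning argument: one must take $g_1, \ldots, g_k$ to be \emph{only} those $u_i$ that actually overlap the chosen occurrence of $w_r$, not the full tail $u_{-j_2}\cdots u_0$, in which case the "end" would be all of $\alpha$ and the argument would collapse. With $v' = u_{-j_1}\cdots u_{-j_2}$ chosen this way, strong right--borderingness of $w_r$ supplies precisely the pinning of generator boundaries to the right edge of $y^-$, and this is exactly the statement $\alpha \in L^*$.
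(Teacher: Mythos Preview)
Your proof is correct and follows essentially the same strategy as the paper's: both arguments sandwich the label $w=\alpha$ between $w_r$ on the left and (implicitly or explicitly) a word boundary on the right, then invoke strong right--borderingness of $w_r$ to force a generator boundary at the left end of $\alpha$. Your version works with a left--ray $y^-\alpha \in P_0 = P_0(L)$ rather than the paper's right--ray $w_r w w_l x^+$, and you spell out both the partitioning details and the easy direction of the biconditional (which the paper leaves implicit via Lemma~\ref{RJ_lem_border_point_general_prop}(\ref{RJ_lem_bp_path_implies})), but the essential idea is identical.
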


\begin{proof}
Let $(F, \mathcal{L}_F)$ be the left Fischer cover of $\mathsf{X}(L)$, let $P \in F^0$ be a border point, and choose $x^+ \in \mathsf{X}(L)^+$ such that $w_lx^+ \in \mathsf{X}(L)^+$. Assume that there is a path from $P$ to $P_0(L) = P_\infty(w_lx^+)$ with label $w$. The word $w_r$ has a partitioning with empty end, so there is a path labelled $w_r$ terminating at $P$. It follows that $w_r w w_l x^+ \in \mathsf{X}(L)^+$, so $w \in L^*$. By symmetry, $L$ is also right--modular.
\end{proof}


For $i \in \{1,2\}$, let $L_i$ be a left--modular generating list and let $X_i = \mathsf{X}(L_i)$ have alphabet $\mathcal{A}_i$ and left Fischer cover $(F_i, \mathcal{L}_i)$. Let $P_i \in F_i^0$ be the universal border point of $L_i$. Assume that $\mathcal{A}_1 \cap \mathcal{A}_2 = \emptyset$.
The left Fischer cover of $X_1 + X_2$ will turn out to be the labelled graph $(F_+, \mathcal{L}_+)$ obtained by taking the union of $(F_1, \mathcal{L}_1)$ and $(F_2, \mathcal{L}_2)$, identifying the two universal border points $P_1$ and $P_2$, and adding certain connecting edges. 
To do this formally, introduce a new vertex $P_+$ and define
$F_+^0 = ( F_1^0 \cup F_2^0 \cup \{P_+\} ) \setminus \{ P_1, P_2 \}$.
Define maps $f_i \colon F_i^0 \to F_+^0$ such that for $v \in F_i^0 \setminus \{ P_i \}$, $f_i(v)$ is the vertex in $F_+^0$ corresponding to $v$ and such that $f_i(P_i) = P_+$. For each $e \in F_i^1$, define an edge $e' \in F_+^1$ such that $s(e') = f_i(s(e))$, $r(e') = f_i(r(e))$, and $\mathcal{L}_+(e') = \mathcal{L}_i(e)$. For each $e \in F_1^1$ with $r(e) = P_1$ and each non--universal border point $P \in F_2^0$, draw an additional edge $e' \in F_+^1$ with $s(e') = f_1(s(e))$, $r(e') = f_2(P)$, and $\mathcal{L}_+(e') = \mathcal{L}_1(e)$. Draw analogous edges for each $e \in F_2^1$ with $r(e) = P_2$ and every non--universal border point $P \in F_1^0$. 
This construction
is illustrated in Fig.\ \ref{RJ_fig_modular_add}.
 
 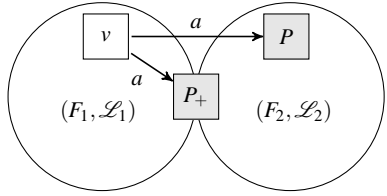
\begin{figure}[htb]
\begin{tikzpicture}
  [bend angle=10,
   clearRound/.style = {circle, inner sep = 0pt, minimum size = 17mm},
   clear/.style = {rectangle, minimum width = 17 mm, minimum height = 6 mm, inner sep = 0pt},  
   greyRound/.style = {circle, draw, minimum size = 1 mm, inner sep =
      0pt, fill=black!10},
   grey/.style = {rectangle, draw, minimum size = 6 mm, inner sep =
      1pt, fill=black!10},
   white/.style = {rectangle, draw, minimum size = 6 mm, inner sep =
      1pt},
   lfc/.style = {circle, draw, minimum size = 2.5cm},
   to/.style = {->, shorten <= 1 pt, >=stealth', semithick}]
  
  \node[lfc] (F1) at (-1.25,0) {};
  \node[lfc] (F2) at (1.25,0) {};
  \node[grey] (P+) at (0,0) {$P_+$};
  \node[white] (S) at (-1.2,0.8) {$v$};
  \node[grey] (P) at (1.2,0.8) {$P$};  
  \node[clear] (F1text) at (-1.3,-0.2) {$(F_1,\mathcal{L}_1)$};
  \node[clear] (F1text) at (1.3,-0.2) {$(F_2,\mathcal{L}_2)$};   
  \node (space) at (5,0) {};
  
  \draw[to] (S) to node[auto,swap] {$a$} (P+);
  \draw[to] (S) to node[auto] {$a$} (P);

\end{tikzpicture}
\hspace{\stretch{1}}
\sidecaption
\caption[Addition of modular renewal systems.]{The labelled graph $(F_+, \mathcal{L}_+)$. In $(F_1, \mathcal{L}_1)$, $v$ emits an edge labelled $a$ to $P_1$, so in $(F_+, \mathcal{L}_+)$, the corresponding vertex emits edges labelled $a$ to every vertex corresponding to a border point $P \in F_2^0$.} 
\label{RJ_fig_modular_add}
\end{figure}

\begin{proposition}
\label{RJ_prop_addition_modular}
If $L_1$ and $L_2$ are left--modular generating lists with disjoint alphabets, then $L_1 \cup L_2$ is left--modular, the left Fischer cover of $\mathsf{X}(L_1 \cup L_2)$ is the graph $(F_+, \mathcal{L}_+)$ constructed above, and the vertex $P_+ \in F_+^0$ is the universal border point of $L_1 \cup L_2$. 
\end{proposition}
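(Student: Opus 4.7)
The plan is to verify that $(F_+, \mathcal{L}_+)$ is an irreducible, left--resolving, predecessor--separated presentation of $\mathsf{X}(L_1 \cup L_2)$, so that by the uniqueness of the left Fischer cover it must coincide with it, and then to verify separately that $P_+$ is the universal border point and that $L_1 \cup L_2$ is left--modular. Irreducibility is immediate since each $F_i$ is irreducible and they share the vertex $P_+$. Left--resolving is a direct check using the disjoint alphabet hypothesis: at $P_+$, incoming edges from $F_1$ carry labels in $\mathcal{A}_1$ and those from $F_2$ carry labels in $\mathcal{A}_2$; at a non--universal border point $P \in F_i^0$, the incoming $F_i$--edges have labels in $\mathcal{A}_i$ while the crossover edges into $P$ have labels in $\mathcal{A}_j$ and correspond bijectively to the edges terminating at $P_j$ in $F_j$, which are themselves left--resolving.

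The central technical tool is an \emph{alphabet--run decomposition}. Because $\mathcal{A}_1 \cap \mathcal{A}_2 = \emptyset$, any path $\lambda$ in $F_+$ splits uniquely as $\lambda = \lambda^{(1)} \cdots \lambda^{(n)}$ where the labels of $\lambda^{(k)}$ all lie in a single alphabet $\mathcal{A}_{i_k}$ with $i_k \neq i_{k+1}$. Inspection of the edges of $F_+$ shows that the vertex between $\lambda^{(k)}$ and $\lambda^{(k+1)}$ is necessarily $P_+$ or a non--universal border point in $F_{i_{k+1}}$, hence in either case a border point of $L_{i_{k+1}}$. Replacing the final edge of $\lambda^{(k)}$, whenever it is a crossover edge terminating at a non--universal border point of $F_{i_{k+1}}$, by the underlying $F_{i_k}$--edge into $P_{i_k}$ produces a lifted path $\bar\lambda^{(k)}$ in $F_{i_k}$ ending at $P_{i_k}$ with the same label as $\lambda^{(k)}$. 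Predecessor--separation of $(F_+, \mathcal{L}_+)$ now follows easily: any left--ray terminating at $v \in F_i^0 \setminus \{P_i\}$ must consist only of $\mathcal{A}_i$--letters, so the corresponding predecessor set is inherited from $F_i$; vertices in distinct parts $F_i$, $F_j$ are separated by the alphabet of the last letter; and $P_+$ is separated from every other vertex because it alone receives letters from both alphabets.

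Both inclusions $\mathsf{X}(L_1 \cup L_2) = \mathsf{X}(F_+, \mathcal{L}_+)$ are handled via the lift. For $\supseteq$, a bi--infinite label from $F_+$ is decomposed into alphabet--runs whose lifts $\bar\lambda^{(k)}$ start at border points of $L_{i_k}$ and end at $P_{i_k}$, so left--modularity of $L_{i_k}$ forces each run label into $L_{i_k}^*$; every finite subword is then a factor of a free concatenation of $L_1 \cup L_2$ words, placing the full label in $\mathsf{X}(L_1 \cup L_2)$. For $\subseteq$, a finite concatenation $g_1 \cdots g_n$ with $g_j \in L_{i_j}$ is grouped into maximal same--alphabet runs, each a word in $L_{i_k}^*$; since $L_{i_k}$ is left--modular, each such run is realised as a path in $F_{i_k}$ ending at $P_{i_k}$, and successive runs are then glued through $P_+$ or, when required, via a crossover edge leading into the initial border point of the next run in $F_+$.

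Once $(F_+, \mathcal{L}_+)$ is identified as the left Fischer cover, a strongly left--bordering witness $x^+ \in \mathsf{X}(L_1)^+$ for $P_1$ works equally well in $\mathsf{X}(L_1 \cup L_2)$: since $x^+$ uses only $\mathcal{A}_1$--letters, every partitioning of $x^+$ over $L_1 \cup L_2$ uses only $L_1$--words, so $x^+$ remains strongly left--bordering there, and its Fischer cover vertex is $P_+$. Left--modularity of $L_1 \cup L_2$ is then verified in both directions through the alphabet--run decomposition applied to paths terminating at $P_+$, combined with left--modularity of $L_1$ and $L_2$ applied to each lifted run; along the way one obtains the characterisation that the border points of $L_1 \cup L_2$ are precisely $P_+$ together with the non--universal border points of $L_1$ and $L_2$. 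The main obstacle throughout is the careful bookkeeping of the lifting step: one has to verify that crossover edges can always be replaced so that the lifted path in $F_i$ actually ends at $P_i$, and that the sources and targets of successive runs are genuine border points of the appropriate $L_{i_k}$, so that left--modularity of $L_{i_k}$ can legitimately be invoked.
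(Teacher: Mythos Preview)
Your overall strategy coincides with the paper's: show $(F_+,\mathcal{L}_+)$ is irreducible, left--resolving and predecessor--separated, invoke the characterisation of the left Fischer cover, and then verify the two inclusions using left--modularity of $L_1$ and $L_2$ applied to maximal same--alphabet runs. Your alphabet--run decomposition makes explicit what the paper compresses into the single sentence ``Assume that $awb \in \mathcal{B}(X_+)$ with $a,b \in \mathcal{A}_1$ and $w \in \mathcal{A}_2^*$'', and you go further than the paper by actually verifying that $L_1 \cup L_2$ is left--modular, which the paper asserts in the statement but does not prove.

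There is, however, a genuine slip in your predecessor--separation argument. You claim that any left--ray terminating at $v \in F_i^0 \setminus \{P_i\}$ must consist only of $\mathcal{A}_i$--letters. This is false whenever $v$ is a non--universal border point of $L_i$: by the very construction of $(F_+,\mathcal{L}_+)$, such a $v$ receives crossover edges from $F_j$ carrying $\mathcal{A}_j$--labels, so left--rays ending at $v$ can mix both alphabets. For the same reason, your next claim that vertices in distinct parts are ``separated by the alphabet of the last letter'' also fails for pairs of non--universal border points on opposite sides. The fix is to restrict attention to left--rays whose labels lie entirely in a single alphabet: a left--ray with only $\mathcal{A}_i$--letters terminating at $v \in F_i^0$ must lie entirely inside $F_i$ (crossover edges with $\mathcal{A}_i$--labels terminate in $F_j$, not $F_i$), so the $\mathcal{A}_i$--only part of $P_\infty^{F_+}(v)$ equals $P_\infty^{F_i}(v)$, which separates vertices within $F_i$; and the $\mathcal{A}_i$--only part of $P_\infty^{F_+}(w)$ for $w \in F_j^0 \setminus \{P_j\}$ is either empty (if $w$ is not a border point) or equals $P_\infty^{F_i}(P_i)$ (if it is), which in either case differs from $P_\infty^{F_i}(v)$ for $v \neq P_i$. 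With this correction your argument goes through; the paper itself simply writes ``by construction'' here and does not spell this out.
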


\begin{proof}
By construction, the labelled graph $(F_+, \mathcal{L}_+)$ is irreducible, left--resolving, and predecessor--separated, so it is the left Fischer cover of some sofic shift $X_+$ \cite[Cor. 3.3.19]{RJ_lind_marcus}.
Given $w \in L_1^*$, there is a path with label $w$ in the left Fischer cover of $X_1$ from some border point $P \in F_1^0$ to the universal border point $P_1$ by Lemma \ref{RJ_lem_border_point_general_prop}(\ref{RJ_lem_bp_path_implies}). Hence, there is also a path labelled $w$ in $(F_+, \mathcal{L}_+)$ from the vertex corresponding to $P$ to the vertex $P_+$. This means that for every border point $Q \in F_2^0$, $(F_+, \mathcal{L}_+)$ contains a path labelled $w$ from the vertex corresponding to $P$ to the vertex corresponding to $Q$. By symmetry, it follows that every element of $(L_X \cup  L_Y)^*$ has a presentation in $(F_+, \mathcal{L}_+)$. Hence, $\mathsf{X}(L_1 \cup L_2) \subset X_+$.

Assume that $awb \in \mathcal{B}(X_+)$ with $a,b \in \mathcal{A}_1$ and $w \in \mathcal{A}_2^*$. Then there must be a path labelled $w$ in $(F_+, \mathcal{L}_+)$ from a vertex corresponding to a border point $P$ of $L_2$ to $P_+$. By construction, this is only possible if there is also a path labelled $w$ from $P$ to $P_2$ in $(F_2, \mathcal{L}_2)$, but $L_2$ is left--modular, so this means that $w \in L_2^*$. By symmetry, $\mathsf{X}(L_1 \cup L_2) = X_+$, and $P_+$ is the universal border point by construction.
\end{proof} 

\index{fragmentation}
Let $X$ be a shift space over the alphabet $\mathcal{A}$. Given $a \in \mathcal{A}$, $k \in \mathbb{N}$, and new symbols $a_1, \ldots, a_k \notin \mathcal{A}$  consider the map 
$f_{a,k} \colon (\mathcal{A} \setminus \{ a \}) \cup \{ a_1, \ldots , a_k\} \to \mathcal{A}$ defined by
$f_{a,k}(a_i) = a$ for each $1 \leq i \leq k$ and $f_{a,k}(b) = b$ when $b \in \mathcal{A} \setminus \{a \}$. Let $F_{a,k} \colon ((\mathcal{A} \setminus \{ a \}) \cup \{ a_1, \ldots , a_k\})^* \to \mathcal{A}^*$ be the natural extension of $f_{a,k}$.  
If $w \in \mathcal{A}^*$ contains $l$ copies of the symbol $a$, then the preimage $F_{a,k}^{-1}(\{w \})$ is the set consisting of the $k^l$ words that can be obtained by replacing the $a$s by the symbols $a_1, \ldots, a_k$.

\begin{definition} 
\index{renewal system!fragmentation of}
\index{shift space!fragmentation of}
\index{X*ak@$X_{a,k}$}
\index{F*ak@$F_{a,k}$}
Let $X = \mathsf{X}_\mathcal{F}$ be a shift space over the alphabet $\mathcal{A}$, let $a \in \mathcal{A}$, let $a_1, \ldots, a_k \notin \mathcal{A}$, and let $F_{a,k}$ be defined as above. Then the shift space $X_{a,k} = \mathsf{X}_{F_{a,k}^{-1}(\mathcal{F})}$ is said to be the shift obtained from $X$ by \emph{fragmenting} $a$ into $a_1, \ldots, a_k$.
\end{definition}

\index{L*@$L_{a,k}$}
\noindent
Note that this construction does not depend on the choice of $\mathcal F$ representing $X$, in particular,  $\mathcal{B}( X_{a,k} ) = F_{a,k}^{-1}(\mathcal{B}(X))$. Furthermore, $X_{a,k}$ is an SFT if and only if $X$ is an SFT.
If $X$ is an irreducible sofic shift, then the left and right Fischer and Krieger covers of $X_{a,k}$ are obtained by replacing each edge labelled $a$ in the corresponding cover of $X$ by $k$ edges labelled $a_1, \ldots, a_k$.
Note that $X$ and $X_{a,k}$ are not generally conjugate or even flow equivalent.
If $X = \mathsf{X}(L)$ is a renewal system, then $X_{a,k}$ is the renewal system generated by the list $L_{a,k} = F_{a,k}^{-1}(L)$. 

\begin{remark}
\label{RJ_rem_fragmentation}
\label{RJ_rem_sum_frag_commute}
Let $A$ be the symbolic adjacency matrix of the left Fischer cover of an SFT renewal system $\mathsf{X}(L)$ with alphabet $\mathcal{A}$. Given $a \in \mathcal{A}$ and $k \in \mathbb{N}$, define $f \colon \mathcal{A} \to \mathbb{N}$ by $f(a) = k$ and $f(b) = 1$ for $b \neq a$. Extend $f$ to the set of finite formal sums over $\mathcal{A}$ in the natural way and consider the integer matrix $f(A)$. Then $f(A)$ is the adjacency matrix of the underlying graph of the left Fischer cover of $\mathsf{X}(L_{a,k})$.
For lists over disjoint alphabets, it follows immediately from the definitions that fragmentation and addition commute. 
\end{remark}

\section{Entropy and flow equivalence}
\label{RJ_sec_rs_entropy}
Hong and Shin \cite{RJ_hong_shin} have constructed a class $H$ of lists generating SFT renewal systems such that $\log \lambda$ is the entropy of an SFT if and only if there exists $L \in H$ with $h(\mathsf{X}(L)) = \log \lambda$, and this is arguably the most powerful general result known about the invariants of SFT renewal systems. In the following, the renewal systems generated by lists from $H$ will be classified up to flow equivalence.
As demonstrated in \cite{RJ_johansen_thesis}, it is difficult to construct renewal systems with non--cyclic Bowen--Franks groups and/or positive determinants directly, and this classification will yield hitherto unseen values of the invariant.


The construction of the class $H$ of generating lists considered in \cite{RJ_hong_shin} will be modified slightly since some of the details of the original construction are invisible up to flow equivalence. In particular, several words from the generating lists can be replaced by single symbols by using symbol reduction. Additionally, there are extra conditions on some of the variables in \cite{RJ_hong_shin} which will be omitted here since the larger class can be classified without extra work.

\index{generating list!in $B$}
\index{B*@$B$}
Let $r \geq 2$ and let $n_1, \ldots, n_r,c_1, \ldots , c_r,d, N \in \mathbb{N}$, and let $W$ be the set consisting of the following words:
\begin{itemize}
\item $\alpha_i = \alpha_{i,1} \cdots \alpha_{i,n_1}$ for $1 \leq i \leq c_1$ 
\item $\tilde \alpha_i = \tilde \alpha_{i,1} \cdots \tilde \alpha_{i, n_1}$ for $1 \leq i \leq c_1$
\item $\gamma_{k,i_k} = \gamma_{k,i_k,1} \cdots \gamma_{k,i_k,n_k}$ for $2 \leq k \leq r$ and $1 \leq i_k \leq c_k$
\item $\alpha_{i_1} \gamma_{2,i_2} \cdots \gamma_{r,i_r} \beta_l^N$
         for $1 \leq i_j \leq c_j$ and $1 \leq l \leq d$
\item $\beta_l^N \tilde \alpha_{i_1} \gamma_{2,i_2} \cdots \gamma_{r,i_r}$
         for $1 \leq i_j \leq c_j$ and $1 \leq l \leq d$.
\end{itemize}
The set of generating lists of this form will be denoted $B$.

\begin{remark}
\label{RJ_rem_R_def}
\index{generating list!in $R$}
\index{R*@$R$}
Symbol reduction can be used to reduce the words $\alpha_i$, $\tilde \alpha_i$, $\gamma_{k,i_k}$, and $\beta_l^N$ to single letters \cite[Lemmas 2.15 and 2.23]{RJ_johansen_thesis}, so up to flow equivalence, the list $W \in B$ considered above can be replaced by the list $W'$ consisting of the one--letter words $\alpha_i$, $\tilde \alpha_i$, and $\gamma_{k,i}$ as well as the words
\begin{itemize}
\item $\alpha_{i_1} \gamma_{2,i_2} \cdots \gamma_{r,i_r} \beta_l$
         for $1 \leq i_j \leq c_j$ and $1 \leq l \leq d$
\item $\beta_l \tilde \alpha_{i_1} \gamma_{2,i_2} \cdots \gamma_{r,i_r}$
         for $1 \leq i_j \leq c_j$ and $1 \leq l \leq d$.
\end{itemize}
Furthermore, if 
\begin{equation} \label{RJ_eq_reduced_list}
L = \{ \alpha, \tilde \alpha, 
 \alpha \gamma_2 \cdots \gamma_r \beta, \beta \tilde \alpha \gamma_2 \cdots \gamma_r \} \cup \{ \gamma_k \mid 2 \leq k \leq r\} \;, 
\end{equation}
then $\mathsf{X}(W')$ can be obtained from $\mathsf{X}(L)$ by fragmenting $\alpha$ to $\alpha_1, \ldots,  \alpha_{c_1}$, $\beta$ to $\beta_1, \ldots , \beta_l$ and so on.
Let $R$ be the set of generating lists of the form given in (\ref{RJ_eq_reduced_list}). 
\end{remark}

\index{H@$H$}\index{generating list!in $H$}
Next consider generating lists $W_1, \ldots , W_m \in B$ with disjoint alphabets, and let $W = \bigcup_{j=1}^m W_j$. 
Let $\tilde W$ be a finite set of words that do not share any letters with each other or with the words from $W$, and consider the generating list $W \cup \tilde W$. Let $\tilde H$ be the set of generating lists that can be constructed in this manner. Let $\mu$ be a Perron  number. Then there exists $\tilde L \in \tilde H$ such that $\mathsf{X}(\tilde L)$ is an SFT and $h(\mathsf{X}(\tilde L)) = \log \mu$ \cite{RJ_hong_shin}.

\begin{remark} \label{RJ_rem_H_letter_frag}
If $W \cup \tilde W \in \tilde H$ as above, then symbol reduction can be used to show that $\mathsf{X}(W \cup \tilde W)$ is flow equivalent to the renewal system generated by the union of $W$ and $\lvert \tilde W \rvert$ new letters \cite[Lemma 2.23]{RJ_johansen_thesis}, i.e.\ $\mathsf{X}(W \cup \tilde W)$ is flow equivalent to a fragmentation of $\mathsf{X}(W \cup \{ a \})$ when $a \notin \mathcal{A}(\mathsf{X}(W))$.
\end{remark}

\index{renewal system!entropy of}
Consider a generating list $\tilde L \in \tilde H$ and $p \in \mathbb{N}$. For each letter $a \in  \mathcal{A}( \mathsf{X}(\tilde L))$, introduce new letters $a_1, \dots , a_p \notin \mathcal{A}( \mathsf{X}(\tilde L))$, and let $L$ denote the generating list obtained by replacing each occurrence of $a$ in $\tilde L$ by the word $a_1\cdots a_p$.
Let $H$ denote the set of generating lists that can be obtained from $\tilde H$ in this manner. Let $\lambda$ be a weak Perron number. Then there exists $L \in H$ such that $\mathsf{X}(L)$ is an SFT and $h(\mathsf{X}(L)) = \log \lambda$ \cite{RJ_hong_shin}.

\begin{remark}
\label{RJ_rem_H_tilde_fe}
If $L$ is obtained from $\tilde L \in \tilde H$ as above, then $\mathsf{X}(L) \sim_{\textrm{FE}} \mathsf{X}(\tilde L)$ since the modification can be achieved using symbol expansion of each $a \in \mathcal{A}( \mathsf{X}(\tilde L))$.
\end{remark}

The next step is to prove that the building blocks in the class $R$ introduced in Remark \ref{RJ_rem_R_def} are left--modular, and to construct the Fischer covers of the corresponding renewal systems. As the following lemmas show, this will allow a classification of the renewal systems generated by lists from $H$ via addition and fragmentation. The first result follows immediately from Remarks \ref{RJ_rem_sum_frag_commute},  \ref{RJ_rem_R_def}, \ref{RJ_rem_H_letter_frag}, and \ref{RJ_rem_H_tilde_fe}.

\begin{lemma}
\label{RJ_lem_entropy_fe}
For each $L \in H$, there exist $L_1, \ldots , L_m \in R$ such that $\mathsf{X}(L)$ is flow equivalent to a fragmentation of $\mathsf{X} ( \bigcup_{j=0}^m L_j )$, where $L_0 = \{ a \}$ for some $a$ that does not occur in $L_1, \ldots , L_m$. 
\end{lemma}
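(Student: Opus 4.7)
The plan is to unwind the definition of $H$ layer by layer, applying each of the four cited remarks in the appropriate order to peel off the extra structure that distinguishes $H$ from the collection of simple lists $R$.

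First I would start with an arbitrary $L \in H$. By construction, $L$ is obtained from some $\tilde L \in \tilde H$ by replacing each alphabet letter with a word of the form $a_1 \cdots a_p$. Remark \ref{RJ_rem_H_tilde_fe} says precisely that this substitution is achieved by iterated symbol expansion, hence $\mathsf{X}(L) \sim_{\textrm{FE}} \mathsf{X}(\tilde L)$. This disposes of the outer letter-to-word substitution and reduces the problem to a statement about the class $\tilde H$.

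Next, $\tilde L$ has the shape $W \cup \tilde W$ with $W = \bigcup_{j=1}^m W_j$, where each $W_j \in B$ sits on its own alphabet and $\tilde W$ is a set of extra words on yet another disjoint alphabet. Applying Remark \ref{RJ_rem_H_letter_frag} collapses $\tilde W$ to a single new letter $a$: up to flow equivalence, $\mathsf{X}(W \cup \tilde W)$ is a fragmentation of $\mathsf{X}(W \cup \{a\})$. At this point the problem has been reduced to lists that are disjoint unions of $W_1,\dots,W_m \in B$ together with the trivial list $L_0 = \{a\}$.

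Finally I invoke Remark \ref{RJ_rem_R_def}: for each $j$, $\mathsf{X}(W_j)$ is flow equivalent to a fragmentation of $\mathsf{X}(L_j)$ for some $L_j \in R$. Since the alphabets of the $W_j$ are mutually disjoint (and disjoint from $\{a\}$), Remark \ref{RJ_rem_sum_frag_commute} lets me interchange these individual fragmentations with the union operation, producing a single global fragmentation of $\mathsf{X}\bigl(\bigcup_{j=0}^m L_j\bigr)$ that is flow equivalent to $\mathsf{X}(W \cup \{a\})$. Chaining this with the two flow equivalences obtained above yields the required conclusion.

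No step is genuinely difficult once the remarks are in hand; the only real bookkeeping concern is keeping track of disjointness of the alphabets so that the commutation of fragmentation with addition is legitimate, and ensuring that the new letter $a$ supplied by Remark \ref{RJ_rem_H_letter_frag} is chosen outside every alphabet used by the $L_j$. I expect this disjointness tracking, rather than any real mathematical content, to be the main thing to be careful about when writing the proof out in full.
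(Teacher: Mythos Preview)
Your proposal is correct and follows exactly the route the paper takes: the paper's proof is literally the single sentence ``follows immediately from Remarks \ref{RJ_rem_sum_frag_commute}, \ref{RJ_rem_R_def}, \ref{RJ_rem_H_letter_frag}, and \ref{RJ_rem_H_tilde_fe}'', and you have simply unpacked what each of those remarks contributes, in the right order. The only thing you add beyond the paper is the explicit bookkeeping about disjoint alphabets, which is indeed the only place care is needed.
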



\begin{lemma}
\label{RJ_lem_entropy_lfc}
If $L \in R$, then $L$ is left--modular, $\mathsf{X}(L)$ is an SFT, and the left Fischer cover of $\mathsf{X}(L)$ is the labelled graph shown in Fig.\ \ref{RJ_fig_entropy_lfc}.
\end{lemma}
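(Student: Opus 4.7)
The proof divides into three tasks: showing $\mathsf{X}(L)$ is an SFT, showing $L$ is left--modular, and identifying the left Fischer cover.

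The SFT property is essentially the observation that the letter $\beta$ is the only letter of the alphabet of $\mathsf{X}(L)$ that is not itself a singleton generator of $L$. Consequently, every occurrence of $\beta$ in a word of $\mathsf{X}(L)$ must either be preceded by the pattern $\alpha\gamma_2\cdots\gamma_r$ (realising the terminal $\beta$ of $w_l := \alpha\gamma_2\cdots\gamma_r\beta$) or followed by $\tilde\alpha\gamma_2\cdots\gamma_r$ (realising the initial $\beta$ of $w_r := \beta\tilde\alpha\gamma_2\cdots\gamma_r$). This is a local condition, captured by a finite list of forbidden words of length at most $2(r+1)$, so $\mathsf{X}(L)$ is an SFT.

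For left--modularity I plan to apply Lemma \ref{RJ_lem_strongly_bordering_implies_modular}. The word $w_l$ is strongly left--bordering because $\alpha$ appears only at the initial position of each generator containing it (namely $\alpha$ itself and $w_l$), forcing the first generator of any partitioning of $w_l$ to begin at the first letter of $w_l$, hence to have empty beginning. Dually, $w_r$ is strongly right--bordering: the terminal letter $\gamma_r$ of $w_r$ could a priori be covered by $w_l$ (which has $\gamma_r$ at position $r$), but the backward chain through $w_r$ reads $\gamma_{r-1}, \ldots, \gamma_2, \tilde\alpha$ while the analogous chain in $w_l$ reads $\gamma_{r-1}, \ldots, \gamma_2, \alpha$, a mismatch at the $\tilde\alpha$/$\alpha$ position that rules out $w_l$ as the final generator of any partitioning. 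The remaining candidates $\gamma_r$ and $w_r$ both have $\gamma_r$ as their terminal letter, so every partitioning of $w_r$ has empty end.

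Next I would exhibit the labelled graph $(F, \mathcal{L}_F)$ of Figure \ref{RJ_fig_entropy_lfc}, built around a central vertex $P_0$ carrying a self--loop for each singleton generator of $L$, together with the two generator chains depicted there. To identify $(F, \mathcal{L}_F)$ with the left Fischer cover, I would verify (i) $\mathsf{X}_{(F, \mathcal{L}_F)} = \mathsf{X}(L)$, since bi--infinite paths in $(F, \mathcal{L}_F)$ spell concatenations of generators and conversely every element of $L^*$ lifts to a path; (ii) irreducibility, which is immediate from the cycles through $P_0$; (iii) left--resolvingness; and (iv) predecessor--separation, which can be read off the rigid suffix signatures $\alpha\gamma_2\cdots\gamma_j$ (and the analogous signatures for the $w_r$--chain) forced along the chains. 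Fischer's uniqueness theorem then completes the identification.

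The main obstacle is the combinatorial bookkeeping of (iii) and (iv) in the presence of the self--loops at $P_0$. Since $P_0$ already receives an incoming edge labelled by each singleton generator, no chain--terminating edge entering $P_0$ may reuse such a label; this forces some of the intermediate vertices along the chain for $w_r$ to coincide with $P_0$, while the intermediate vertices on the $w_l$--chain remain distinct because the terminal edge of that chain carries the non--singleton label $\beta$. Verifying that the vertex identifications prescribed by Figure \ref{RJ_fig_entropy_lfc} are precisely those forced by left--resolvingness and predecessor--separation is the technical heart of the argument.
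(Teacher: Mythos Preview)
Your modularity argument via Lemma~\ref{RJ_lem_strongly_bordering_implies_modular} is essentially the paper's, with the minor simplification of using $w_l$ and $w_r$ separately rather than their concatenation $w_lw_r$. Your SFT argument through the local constraint on $\beta$ is correct and more explicit than the paper, which simply cites \cite[Lemma 5.46]{RJ_johansen_thesis}.

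The Fischer cover step is where your proposal and the paper diverge, and where your last paragraph contains a genuine misconception. The paper does not verify axioms for a given graph; it computes the predecessor sets $P_\infty(x^+)$ directly by a case analysis on the possible beginnings of partitionings of $x^+$, obtaining exactly $2r+1$ distinct sets $P_0,\ldots,P_{2r}$ and reading off the edges from the Krieger--cover description. Your alternative---take the graph of Figure~\ref{RJ_fig_entropy_lfc} and check irreducibility, left--resolvingness, predecessor--separation, and $\mathsf{X}_{(F,\mathcal{L}_F)}=\mathsf{X}(L)$---is legitimate in principle, but your reading of the figure is wrong. The vertex $P_0$ does \emph{not} carry a self--loop for every singleton generator: there is no $\gamma_r$--labelled self--loop at $P_0$, and in fact $P_0$ emits no $\gamma_r$--edge whatsoever. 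The reason is that every right--ray of the form $\gamma_r x^+$ admits a partitioning with beginning $\beta\tilde\alpha\gamma_2\cdots\gamma_{r-1}$ (coming from $w_r$), so $\gamma_r x^+$ is never strongly left--bordering and $P_\infty(\gamma_r x^+)\neq P_0$. The $\gamma_r$ self--loop sits at $P_{2r}$, and no vertex of either chain is identified with $P_0$; the graph really has $2r+1$ distinct vertices. What you have overlooked is the large collection of extra edges from $P_0$ to the non--universal border points $P_{r+1},\ldots,P_{2r}$ (the ``$x$''--edges in the figure caption), which is how the graph stays left--resolving without any vertex mergers.

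With that correction your plan could be carried out, but step~(i) is not as light as you suggest: checking that those extra edges to the border points do not enlarge the language, and that the absent $\gamma_r$ self--loop at $P_0$ does not shrink it, amounts to the same partitioning case analysis the paper performs.
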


\begin{proof}
Let
\begin{equation}
\label{RJ_eq_R_L}
L = \{ \alpha, \tilde \alpha,  
 \alpha \gamma_2 \cdots \gamma_r \beta, \beta \tilde \alpha \gamma_2 \cdots \gamma_r \} \cup \{ \gamma_k \mid 2 \leq k \leq r\} \in R \;.
\end{equation}
The word $\alpha \gamma_2 \cdots \gamma_r \beta \beta \tilde \alpha \gamma_2 \cdots \gamma_r$ is strongly left-- and right--bordering, so $L$ is left-- and right--modular by Lemma \ref{RJ_lem_strongly_bordering_implies_modular}. Let $P_0 = P_0(L)$.
If $x^+ \in \mathsf{X}(L)^+$ does not have a suffix of a product of the generating words $\alpha \gamma_2 \cdots \gamma_r \beta$ and $\beta \tilde \alpha \gamma_2 \cdots \gamma_r$ as a prefix, then $x^+$ is strongly left--bordering, so $P_\infty(x^+) = P_0$. Hence, to determine the rest of the predecessor sets and thereby the vertices of the left Fischer cover, it is sufficient to consider right--rays that do have such a prefix.

Consider first $x^+ \in \mathsf{X}(L)^+$ such that $\beta x^+ \in \mathsf{X}(L)^+$. The letter $\beta$ must come from either $\alpha \gamma_2 \cdots \gamma_r \beta$ or $\beta \tilde \alpha \gamma_2 \cdots \gamma_r$, so the beginning of a partitioning of $\beta x^+$ must be either empty or equal to $\alpha \gamma_2 \cdots \gamma_r$.
Assume first that every partitioning of $\beta x^+$ has beginning $\alpha \gamma_2 \cdots \gamma_r$ (i.e.\ that $\tilde \alpha \gamma_2 \cdots \gamma_r$ is not a prefix of $x^+$).
In this case, $\beta x^+$ must be preceded by $\alpha \gamma_2 \cdots \gamma_r$, and the corresponding predecessor sets are: 
\begin{align}
\label{RJ_eq_entropy_predecessor_sets}
  P_\infty( \alpha \gamma_2 \cdots \gamma_r \beta x^+ ) &= P_0 \nonumber \\ 
  P_\infty( \gamma_2 \cdots \gamma_r \beta x^+ ) &= P_0 \alpha = P_1 \nonumber \\ 
  &\;\, \vdots \\
  P_\infty( \gamma_r \beta x^+ ) &= P_0\alpha \gamma_2 \cdots \gamma_{r-1}  = P_{r-1} \nonumber \\
  P_\infty( \beta x^+ ) &= P_0\alpha \gamma_2 \cdots \gamma_{r-1}\gamma_{r}  = P_{r} \;. \nonumber
\end{align}
Assume now that there exists a partitioning of  $\beta x^+$ with empty beginning (e.g.\  $x^+ = \beta \tilde \alpha \gamma_2 \cdots \gamma_r^\infty$).
The first word used in such a partitioning must be $\beta \tilde \alpha \gamma_2 \cdots \gamma_r$. Replacing this word by the concatenation of the generating words $\alpha \gamma_2 \cdots \gamma_r \beta$, $\tilde \alpha, \gamma_2, \ldots, \gamma_r$ creates a partitioning of $\beta x^+$ with beginning $\alpha \gamma_2 \cdots \gamma_r $, so in this case:
\begin{align*}
P_\infty( \alpha \gamma_2 \cdots \gamma_r \beta x^+ ) &= P_0 \\
P_\infty( \gamma_2 \cdots \gamma_r \beta x^+ ) &= P_0 \cup P_0\alpha = P_0\\
 &\;\, \vdots  \\
 P_\infty(  \gamma_r \beta x^+) &= P_0 \cup P_0\alpha \gamma_2 \cdots \gamma_{r-1}  = P_0 \\ 
P_\infty(  \beta x^+) &= P_0 \cup P_0\alpha \gamma_2 \cdots \gamma_{r-1}\gamma_{r}  = P_0\;.
\end{align*}
The argument above proves that there are no right--rays such that every partitioning of $\beta x^+$ has empty beginning.

\begin{figure}[tb]
\begin{center}
\begin{tikzpicture}
  [bend angle=5,
   clearRound/.style = {circle, inner sep = 0pt, minimum size = 17mm},
   clear/.style = {rectangle, minimum width = 10 mm, minimum height = 6 mm, inner sep = 0pt},  
   greyRound/.style = {circle, draw, minimum size = 1 mm, inner sep =
      0pt, fill=black!10},
  grey/.style = {rectangle, draw, minimum size = 6 mm, inner sep =
    1pt, fill=black!10},
  white/.style = {rectangle, draw, minimum size = 6 mm, inner sep =
    1pt},
   to/.style = {->, shorten <= 1 pt, shorten >= 1 pt, >=stealth', semithick},
   scale=0.9]
  
 \node[grey] (P0) at (0,0) {$P_0$};

 \node[white] (P1) at (-6,-4) {$P_1$};
 \node[white] (P2) at (-3,-4) {$P_2$};
 \node[clear] (dots1) at (0,-4) {$\cdots$};
  \node[white] (Pr-1) at (3,-4) {$P_{r-1}$};
 \node[white] (Pr) at (6,-4) {$P_r$};

 \node[grey] (Pr+1) at (-4,-2) {$P_{r+1}$};
 \node[grey] (Pr+2) at (-1,-2) {$P_{r+2}$};
 \node[clear] (dots2) at (1,-2) {$\cdots$};
 \node[grey] (P2r) at (4,-2) {$P_{2r}$};

  \draw[to, loop above] (P0) to node[auto] {$x$} (P0);

  \draw[to, bend right=45] (P0) to node[auto,swap] {$\alpha$} (P1);
  \draw[to] (P1) to node[auto,swap] {$\gamma_2$} (P2);
  \draw[to] (P2) to node[auto,swap] {$\gamma_3$} (dots1);
  \draw[to] (dots1) to node[auto,swap] {$\gamma_{r-1}$} (Pr-1);
  \draw[to] (Pr-1) to node[auto,swap] {$\gamma_{r}$} (Pr);  
  \draw[to,bend right=45] (Pr) to node[auto,swap] {$\beta$} (P0);
 
  \draw[to, bend left=10] ($(Pr)+(-0.4,0.2)$) to node[near start, above] {$\beta$} (Pr+2);
  \draw[to] (Pr) to node[auto,swap] {$\beta$} (P2r);
  
   \draw[to] (P0) to node[auto,swap] {$\beta,x$} (Pr+1);   
  \draw[to] (Pr+1) to node[auto] {$\tilde \alpha$} (Pr+2);
  \draw[to]  (Pr+2) to node[auto] {$\gamma_{2}$} (dots2); 
  \draw[to] (dots2) to node[auto] {$\gamma_{r-1}$} (P2r); 
  \draw[to,bend right] (P2r) to node[auto,swap] {$\gamma_{r}$} (P0);
  \draw[to,loop above] (P2r) to node[auto] {$\gamma_{r}$} (P2r); 
  \draw[to,bend left=30] (P2r) to node[auto] {$\gamma_{r}$} (Pr+1);
  \draw[to,bend left=10] (P2r) to node[auto] {$\gamma_{r}$} (Pr+2);   

  \draw[to] (P0) to node[auto, swap, near end] {$x$} (Pr+2);
  \draw[to,bend right] (P0) to node[auto,swap] {$x$} (P2r);
    
\end{tikzpicture}
\end{center}
\caption[Building blocks for achieving entropies.]{Left Fischer cover of $\mathsf{X}(L)$ for $L$ defined in (\ref{RJ_eq_R_L}).
An edge labelled $x$ from a vertex $P$ to a vertex $Q$ represents a collection of edges from $P$ to $Q$ such that $Q$ receives an edge with each label from the set 
$\bigcup_{2\leq j \leq r}  \{ \gamma_j  \} \cup \{  \alpha, \tilde \alpha \}$, i.e.\ the collection fills the gaps left by the edges which are labelled explicitly. The border points are coloured grey.} 
\label{RJ_fig_entropy_lfc}
\vspace{1cm}
\end{figure}
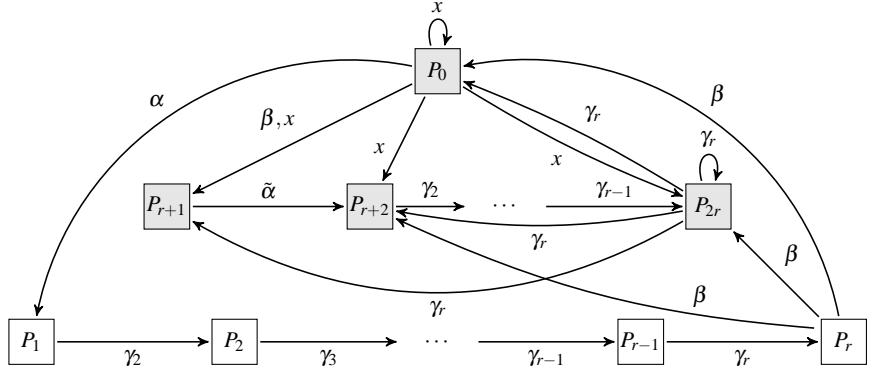

It only remains to investigate right--rays that  have a suffix of $\beta \tilde \alpha \gamma_2 \cdots \gamma_r$ as a prefix.
A partitioning of a right--ray $\gamma_r x^+$ may have empty beginning (e.g.\  $x^+ = \gamma_r^\infty$), beginning $\alpha \gamma_2 \cdots \gamma_{r-1}$ (e.g.\ $x^+ =  \beta \beta \tilde \alpha \gamma_2 \cdots \gamma_r \cdots$ or $x^+ = \beta \tilde \alpha \gamma_2 \cdots \gamma_r^\infty$), or beginning $\beta \tilde \alpha \gamma_2 \cdots \gamma_{r-1}$ (e.g.\  $x^+ = \gamma_r^\infty)$. Note that there is a partitioning with empty beginning if and only if there is a partitioning with beginning $\beta \tilde \alpha \gamma_2 \cdots \gamma_{r-1}$.  
If there exists a partitioning of $\gamma_r x^+$ with beginning $\alpha \gamma_2 \cdots \gamma_{r-1}$, then $\beta$ must be a prefix of $x^+$, so the right--ray $\gamma_r x^+$ has already been considered above.
Hence, it suffices to consider the case where there exists a partitioning of $\gamma_r x^+$ with empty beginning and a partitioning with beginning $\beta \tilde \alpha \gamma_2 \cdots \gamma_{r-1}$ but no partitioning with beginning $\alpha \gamma_2 \cdots \gamma_{r-1}$. In this case, the predecessor sets are 
\begin{align*}
  P_\infty( \gamma_{r} x^+ ) &= P_0 \cup P_0\beta \tilde \alpha \gamma_2 \cdots \gamma_{r-1} = P_{2r}\\
&\;\,\vdots \\
P_\infty( \gamma_2 \cdots \gamma_{r} x^+ ) &= P_0 \cup P_0\beta \tilde \alpha = P_{r+2}\\
 P_\infty( \tilde \alpha \gamma_2 \cdots \gamma_{r} x^+ ) &= P_0 \cup P_0\beta = P_{r+1}\\
P_\infty( \beta \tilde \alpha \gamma_2 \cdots \gamma_{r} x^+ ) &= P_0 
  \cup P_0\alpha \gamma_2 \cdots \gamma_{r} = P_0 \; .
\end{align*}

Now all right--rays have been investigated, so there are exactly $2r +1$ vertices in the left Krieger cover of $\mathsf{X}(L)$. The vertex $P_0$ is the universal border point, and the vertices $P_{r+1}, \ldots, P_{2r}$ are border points, while none of the vertices $P_1, \ldots, P_r$ are border points. This gives the information needed to draw the left Fischer cover.

In \cite{RJ_hong_shin} it is proved that all renewal systems in the class $B$ are SFTs. That proof will also work for the related class $R$ considered here, but the result also follows easily from the structure of the left Fischer cover constructed above \cite[Lemma 5.46]{RJ_johansen_thesis}.
\end{proof}

\begin{lemma}
\label{RJ_lem_entropy_cyclic}
Let $L \in R$ and let $X_f$ be a renewal system obtained from $\mathsf{X}(L)$ by fragmentation. Then the Bowen--Franks group of $X_f$ is cyclic, and the determinant is given by (\ref{RJ_eq_det_R}).
\end{lemma}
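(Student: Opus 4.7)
The plan is to compute the Smith normal form of $\mathrm{Id} - A_f$, where $A_f$ is the adjacency matrix of the underlying graph of the left Fischer cover of $X_f$. By Remark~\ref{RJ_rem_fragmentation}, $A_f$ is obtained from the symbolic adjacency matrix of the Fischer cover described in Lemma~\ref{RJ_lem_entropy_lfc} (Fig.~\ref{RJ_fig_entropy_lfc}) by substituting, for each letter $\sigma \in \{\alpha, \tilde\alpha, \beta, \gamma_2, \ldots, \gamma_r\}$, the positive integer $k_\sigma$ recording the number of fragments into which $\sigma$ is split. First I would read off $A_f$ in block form, using rows and columns indexed in the order $P_0, P_1, \ldots, P_r, P_{r+1}, \ldots, P_{2r}$, paying special attention to how the $x$-edges in the figure contribute. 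An $x$-edge into a target $Q$ supplies one edge for each label in $\{\alpha, \tilde\alpha, \gamma_2, \ldots, \gamma_r\}$ not already present among the explicit edges into $Q$, and after fragmentation each such edge is multiplied by the corresponding $k_\sigma$.

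Next, I would exploit the chain structure of the graph to perform integer row and column operations, which preserve both the cokernel and, up to sign, the determinant. The vertices $P_1, \ldots, P_{r-1}$ each receive a single edge from the preceding vertex in the chain and emit a single edge to the next, so the corresponding rows of $\mathrm{Id} - A_f$ have precisely two nonzero entries: a $1$ on the diagonal and $-k_{\gamma_j}$ in the column of the predecessor. These rows can be used to clear their columns, eliminating the vertices $P_1, \ldots, P_{r-1}$ at the cost of introducing the multiplicative factor $k_\alpha k_{\gamma_2}\cdots k_{\gamma_{r-1}}$ at vertex $P_r$. An analogous argument applied to the chain $P_{r+1}, P_{r+2}, \ldots, P_{2r-1}$ further reduces the problem to a small block concerning only $P_0$, $P_r$, and $P_{2r}$.

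After these eliminations, the cokernel of $\mathrm{Id}-A_f$ is isomorphic to the cokernel of a $3 \times 3$ integer matrix whose entries are explicit monomials and polynomials in the fragmentation multiplicities. At this point I would exhibit an elementary row or column whose entries have greatest common divisor $1$; the edges from $P_0$ going to itself with label $\alpha$ (as a loop) and to $P_1$ with label $\alpha$ (together with the analogous $\tilde\alpha$-edges) should supply the required $1$. Adding and subtracting suitable integer multiples of this row and column clears two of the three diagonal entries to $1$, so the Smith normal form is $\mathrm{diag}(1,1,d)$ for a single integer $d$. This yields $\mathrm{BF}(A_f) \simeq \mathbb{Z}/d\mathbb{Z}$, which is cyclic, and $\lvert \det(\mathrm{Id} - A_f) \rvert = \lvert d \rvert$, with the sign tracked through the elementary operations giving the asserted formula~(\ref{RJ_eq_det_R}).

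The main obstacle is the bookkeeping for the $x$-edges, since the effective multiplicity of the parallel edges from $P_0$ into each of $P_{r+1}, \ldots, P_{2r}$ depends on which explicit edges terminate there, and each such multiplicity is a sum of products of the $k_\sigma$ that must propagate correctly through the chain eliminations. Once the matrix is written down with care, the elementary reductions are mechanical; the remaining technical task is to verify that the final entry $d$ is nonzero (so that $X_f$ is not flow equivalent to the trivial shift and Franks' theorem produces a genuine invariant) and matches the closed form stated as~(\ref{RJ_eq_det_R}).
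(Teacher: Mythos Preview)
Your plan is essentially the paper's: write down the $(2r{+}1)\times(2r{+}1)$ adjacency matrix from Fig.~\ref{RJ_fig_entropy_lfc}, replace letters by fragmentation multiplicities, and reduce $\RJId-A_f$ by integer row and column operations until the Smith form $\RJdiag(1,\ldots,1,d)$ appears. The paper orders the operations slightly differently: before touching the chains it exploits that columns $0,r{+}1,\ldots,2r$ all have the same column sum $\alpha+\tilde\alpha+\beta+\sum_k\gamma_k$, so after adding rows $r{+}1,\ldots,2r{-}1$ to row $0$ and then subtracting column $0$ from each of columns $r{+}1,\ldots,2r$, the entire right block collapses and a literal $1$ sits at position $(2r,2r)$. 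That is where the unit entry comes from, not from the pair of $\alpha$-edges you point to: once you chain-eliminate $P_1,\ldots,P_{r-1}$, the $\alpha$-edge to $P_1$ is absorbed into the $P_r$-column with weight $\alpha\gamma_2\cdots\gamma_r$, and pairing that against the $\alpha$-contribution to the $P_0$-loop does not give $\gcd 1$ in general. With the column-sum observation in place your reduction goes through verbatim and lands on the paper's final $2\times2$ corner
\[
\begin{pmatrix} 1-\gamma-b & t \\ -\gamma_r & 1 \end{pmatrix},\qquad b=\alpha\beta\gamma_2\cdots\gamma_r,\quad t=\tilde\alpha\gamma_2\cdots\gamma_{r-1}(b-\beta)-1,
\]
whose $(2,2)$ entry is the needed $1$. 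Two minor corrections: in your chain rows the off-diagonal $-k_{\gamma_j}$ sits in the column of the \emph{successor}, not the predecessor; and the closing remark about verifying $d\neq 0$ is unnecessary, since the lemma only claims cyclicity (and $\mathbb{Z}/0\mathbb{Z}\cong\mathbb{Z}$ is cyclic).
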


\begin{proof}
Let $L \in R$ be defined by (\ref{RJ_eq_R_L}). The symbolic adjacency matrix of the left Fischer cover of $\mathsf{X}(L)$ (shown in Fig.\ \ref{RJ_fig_entropy_lfc}) is 
\begin{displaymath}
A = \left( \begin{array}{c | c c c c c  | c c c c c c}
  \gamma & \alpha & 0 & \cdots  & 0 & 0 & \gamma+\beta & \tilde \alpha' & \gamma'_2 & \cdots  & \gamma'_{r-2} & \gamma'_{r-1} \\
  \hline
  0         & 0         & \gamma_2 & \cdots & 0 & 0    & & & & & &\\
  0         & 0         & 0     &            & 0 & 0    & & & & & &\\
  \vdots & \vdots &        & \ddots &    & \vdots      & & & & 0 & &\\ 
  0         & 0         & 0     &            & 0 &  \gamma_r & & & & & &\\ 
  \beta         & 0         & 0     & \cdots & 0 & 0    & 0  & \beta & \beta & \cdots & \beta & \beta \\   
  \hline
  0         & & &    & & & 0 &  \tilde \alpha & 0     & \cdots & 0 & 0 \\
  0         & & &    & & & 0 & 0     &  \gamma_2 &            & 0 & 0 \\
  0         & & &    & & & 0 & 0     & 0     &            & 0 & 0 \\
  \vdots & & & 0 & & & \vdots   &        &        & \ddots &    & \vdots   \\
  0         & & &    & & & 0 & 0     & 0     &            & 0 &  \gamma_{r-1} \\
   \gamma_r      & & &    & & &  \gamma_r &  \gamma_r     &  \gamma_r     & \cdots &  \gamma_r &  \gamma_r \\   
 \end{array} \right) \; ,
 \end{displaymath} 
where $ \gamma = \alpha+\tilde \alpha+ \sum_{k=2}^{r-1}  \gamma_k$, $\tilde \alpha' =  \gamma -  \tilde \alpha$,  and $\gamma'_k =  \gamma -  \gamma_k$. Index the rows and columns of $A$ by $0,\ldots, 2r$ in correspondence with the names used for the vertices above, and note that the column sums of the columns $0, r+1, \ldots, 2r$ are all equal to $\alpha+\tilde\alpha +\beta+ \sum_{k=2}^{r}  \gamma_k$.

If $X_f$ is a fragmentation of $\mathsf{X}(L)$, then the (non--symbolic) adjacency matrix $A_f$ of the underlying graph of the left Fischer cover of $X_f$ is obtained from $A$ by replacing $\alpha, \tilde \alpha, \beta, \gamma_2, \ldots , \gamma_r$ by positive integers (see Remark \ref{RJ_rem_fragmentation}).
To put $\RJId - A_f$ into Smith normal form, begin by adding each row from number $r+1$ to $2r-1$ to the first row, and subtract the first column from column $r+1, \ldots, 2r$ to obtain
\begin{displaymath}
\RJId - A_f \rightsquigarrow 
\left( \begin{array}{c | c c c c c  | c c  c c c}
  1- \gamma & - \alpha & 0 & \cdots  & 0 & 0 & -\beta & 0 & \cdots  & 0 & -1 \\
  \hline
  0         & 1         & -\gamma_2 & \cdots & 0 & 0    & & & & &\\
  0         & 0         & 1     &              & 0 & 0    & & & & &\\
  \vdots & \vdots &        & \ddots &    & \vdots      & &  & 0 & &\\ 
  0         & 0         & 0     &            & 1 & - \gamma_r &  & & & &\\ 
  -\beta         & 0         & 0     & \cdots & 0 & 1    & \beta  & 0 & \cdots & 0 & 0\\   
  \hline
  0         & & &    & & & 1 & -\tilde \alpha & \cdots & 0 & 0 \\
  0         & & &    & & & 0 & 1     &     & 0 & 0 \\
  \vdots & & & 0 & & & \vdots   &         & \ddots &    & \vdots   \\
  0         & & &    & & & 0 & 0         &            & 1 & - \gamma_{r-1} \\
  - \gamma_r      & & &    & & & 0 & 0         & \cdots & 0 &  1 \\   
\end{array} \right) \: .
\end{displaymath} 
Using row and column addition, this matrix can be further reduced to 
\begin{displaymath}
\rightsquigarrow 
\left( \begin{array}{c | c c c   |   c c c}
  1- \gamma - b & 0 &  \cdots   & 0 & 0 & \cdots  &  t \\
  \hline
  0         & 1         &   \cdots  & 0             &  & &\\
  \vdots & \vdots &   \ddots  & \vdots      & &  0 &\\ 
  0         & 0         &   \cdots  & 1             &   &   & \\   
  \hline
  0         & &    &    & 1          & \cdots   & 0 \\
  \vdots & & 0 &    & \vdots  &  \ddots   & \vdots   \\
  -\gamma_r         & &    &    &  0         &  \cdots   &  1 \\   
 \end{array} \right)
 \begin{array}{l}
\\
 b = \alpha \beta \gamma_2 \cdots \gamma_r \\
 \\
 t = \tilde \alpha \gamma_2 \cdots \gamma_{r-1} (b - \beta) -1 \; . \\
 \\
 \end{array}
\end{displaymath}
Hence, the Bowen--Franks group of $X_f$ is cyclic, and the determinant is 
\begin{equation} \label{RJ_eq_det_R}
\det(\RJId - A) 
                   = 1 - \alpha - \tilde \alpha - \sum_{k=2}^r \gamma_k - (\alpha+\tilde \alpha) \beta \gamma_2 \cdots \gamma_r  + \alpha \tilde \alpha \beta(\gamma_2 \cdots \gamma_r)^2 \;.   
\end{equation}
\end{proof}

\begin{theorem}
For each $L \in H$, the renewal system $\mathsf{X}(L)$ has cyclic Bowen--Franks group and determinant given by (\ref{RJ_eq_entropy_det}). \label{RJ_thm_H_classification}
\end{theorem}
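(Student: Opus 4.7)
The plan is to reduce to the single-block computation of Lemma \ref{RJ_lem_entropy_cyclic} by exploiting modularity and then running the same Smith-normal-form argument on the enlarged matrix. First, by Lemma \ref{RJ_lem_entropy_fe}, every $L\in H$ gives rise to a renewal system flow equivalent to a fragmentation of $\mathsf{X}(\bigcup_{j=0}^m L_j)$, where $L_0=\{a\}$ and $L_1,\ldots,L_m\in R$ have pairwise disjoint alphabets. Since the Bowen--Franks invariant depends only on the flow equivalence class, it suffices to compute $\RJBF(\RJId-A_f)$ for the fragmented Fischer cover of this sum.

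Next, each $L_j$ (for $j\geq 1$) is left--modular by Lemma \ref{RJ_lem_entropy_lfc}, and $L_0=\{a\}$ is trivially left--modular (its Fischer cover is a single vertex which is the universal border point). Hence Proposition \ref{RJ_prop_addition_modular} applies and may be iterated to describe the left Fischer cover of $\mathsf{X}(\bigcup_{j=0}^m L_j)$: start from the disjoint union of the covers given by Fig.\ \ref{RJ_fig_entropy_lfc} (one for each $L_j$, $j\geq 1$) together with the single vertex for $L_0$, identify all universal border points to a common vertex $P_+$, and for every edge into an old $P_0^{(j)}$ add parallel copies ending at each non--universal border point in the other blocks. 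By Remark \ref{RJ_rem_fragmentation}, fragmentation replaces each symbolic entry $\alpha,\tilde\alpha,\beta,\gamma_k$ by a positive integer (the number of symbols it was fragmented into), so $A_f$ is obtained from this symbolic matrix by that substitution.

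The main step is a Smith--reduction of $\RJId-A_f$ mimicking the one in the proof of Lemma \ref{RJ_lem_entropy_cyclic}. The crucial structural fact is that, within each block $j\geq 1$, the columns indexed by the border points $P_0^{(j)},P_{r+1}^{(j)},\ldots,P_{2r}^{(j)}$ share a common column sum, and that after modular addition the columns corresponding to $P_+$ and to every non--universal border point of any block still share a common sum (because any edge into $P_0^{(j)}$ is replicated to every border point of every other block). Performing, block by block, exactly the row and column operations used in Lemma \ref{RJ_lem_entropy_cyclic} — adding the intermediate rows of each block to the first row of that block, subtracting the universal--border--point column from each border--point column within that block — clears out every off--diagonal entry except those attached to $P_+$, and leaves identity blocks on the remaining rows and columns. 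The row/column corresponding to $P_+$ collapses to a single scalar, and after one final set of row additions collapsing the block rows into the $P_+$ row, $\RJId-A_f$ is equivalent to a diagonal matrix whose only non--unit entry is a single integer $d$. This proves that $\RJBF(A_f)\simeq\mathbb{Z}/d\mathbb{Z}$ is cyclic.

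The hardest and most error--prone step will be keeping track of the many extra connecting edges introduced by Proposition \ref{RJ_prop_addition_modular} and checking that the column--sum balance survives them, so that each block's reduction really can be carried out independently inside the larger matrix. Once that bookkeeping is in place, the scalar $d$ produced by the reduction is the alternating sum/product of the one--block determinants from (\ref{RJ_eq_det_R}) (together with the trivial contribution $1-a$ coming from $L_0$), and this expression is exactly the formula (\ref{RJ_eq_entropy_det}).
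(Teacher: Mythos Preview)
Your proposal is correct and follows essentially the same route as the paper: reduce via Lemma~\ref{RJ_lem_entropy_fe} to a fragmentation of a modular sum, build the Fischer cover with Proposition~\ref{RJ_prop_addition_modular}, and then run the block-by-block row/column reduction of Lemma~\ref{RJ_lem_entropy_cyclic} on the enlarged matrix (noting that the identified universal border points give a single shared row/column, so ``first row of that block'' is always the common $P_+$ row). The only minor imprecision is your description of the resulting scalar as an ``alternating sum/product of the one-block determinants'': because the diagonal term $1-\gamma$ at $P_+$ involves letters from \emph{all} blocks simultaneously, the determinant is not a simple combination of the individual values from~(\ref{RJ_eq_det_R}) but rather the coupled expression~(\ref{RJ_eq_entropy_det}), exactly as the paper computes.
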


\begin{proof}
By Lemma \ref{RJ_lem_entropy_fe}, there exist $L_1, \ldots , L_m \in R$, $L_0 = \{a \}$ for some letter $a$ that does not appear in any of the lists,  and a fragmentation $Y_f$ of $Y = \mathsf{X}( \bigcup_{j=0}^m L_j )$ such that $Y_f \sim_{\textrm{FE}} \mathsf{X}(L)$. For $1 \leq j \leq m$, let 
$  L_j = \{ \alpha_j, \tilde \alpha_j, \gamma_{j,k}, 
 \alpha_j \gamma_{j,2} \cdots \gamma_{j,r_j} \beta_j, \beta_j \tilde \alpha_j \gamma_{j,2} \cdots \gamma_{j,r_j} \mid 2 \leq k \leq r_j\} , r_j \in \mathbb{N}$.
Each $L_j$ is left--modular by Lemma \ref{RJ_lem_entropy_lfc},
so $Y$ is an SFT, and the left Fischer cover of $Y$ can be constructed using the technique from Sec.\ \ref{RJ_sec_rs_lfc}: Identify the universal border points in the left Fischer covers of $\mathsf{X}(L_0), \ldots , \mathsf{X}(L_m)$, and draw additional edges to the border points corresponding to the edges terminating at the universal border points in the individual left Fischer covers. Hence, the symbolic adjacency matrix $A$ of the left Fischer cover of $Y$ is 
\begin{displaymath}
A = 
\left( \!\!\! \begin{array}{c | c | c c c c  | c c c c | c | c }
  \gamma &             & \alpha_j & 0 & \cdots  & 0 &  \gamma+\beta_ j & \tilde \alpha'_{ j} & \cdots   & \gamma'_{ j,{r_ j-1}}  & \cdots & \gamma'_{i,k}\\
   \hline
                &\ddots    &             & & & & & & & & &  \\
   \hline
  0            &              & 0           & \gamma_{ j,2} & \cdots  & 0    & & & & & &\\
  0            &              & 0           & 0     &                & 0    & & & & & & \\
  \vdots    &              & \vdots   &        & \ddots      & \vdots     & &  & 0 & & &\\ 
  0            &              & 0           & 0     &                & \gamma_{ j,r} & & & &  & & \\ 
  \beta_ j  &              & 0           & 0     & \cdots  & 0    & 0  & \beta_ j & \cdots & \beta_ j & & \beta_ j \\   
  \hline
  0           &               &             &    & & & 0 & \tilde \alpha_j &  \cdots & 0 &  &\\
  0           &               &             &    & & & 0 & 0     &             & 0 & & \\
  \vdots   &               &             & & 0 & & \vdots   &        &      \ddots &   \vdots   & & \\
  0           &               &             &    & & & 0 & 0     &             &  \gamma_{ j,{r_ j-1}}  & &\\
  \gamma_{ j,r_ j}  & &             &    & & & \gamma_{ j,r_ j} &  \gamma_{ j,r_ j}  & \cdots &  \gamma_{ j,r_ j}  & & \gamma_{ j,r_ j}\\  
  \hline
   &                           & &    & & &  &     &  & &  \ddots & \\     
  \hline 
   &                           & &    & & &  &     &  & &   & \ddots      
\end{array} \!\!\! \right) \; .
\end{displaymath} 
where $1 \leq j \leq m$, $\gamma = a + \sum_{j=1}^m \left( \alpha_j+\tilde\alpha_j + \sum_{k=2}^{r_j-1} \gamma_{j,k} \right)$, $\tilde \alpha_j' = \gamma - \tilde \alpha_j$, and $\gamma_{j,k}' = \gamma - \gamma_{j,k}$. This matrix has blocks of the same form as in the $m=1$ case considered in Lemma \ref{RJ_lem_entropy_lfc}. The $j$th block is shown together with the first row and column of the matrix---which contain the connections between the $j$th block and the universal border point $P_0$---and together with an extra column representing an arbitrary border point in a different block. Such a border point in another block will receive edges from the $j$th block with the same sources and labels as the edges that start in the $j$th block and terminate at the universal border point $P_0$. 

Let $Y_f$ be a fragmentation of $Y$. Then the (non--symbolic) adjacency matrix $A_f$ of the underlying graph of the left Fischer cover of $Y_f$ is obtained by replacing the entries of $A$ by positive integers as described in Remark \ref{RJ_rem_fragmentation}.
In order to put $\RJId - A_f$ into Smith normal form, first add rows $r_j+1$ to $2r_j-1$
in the $j$th block to the first row for each $j$, and then subtract the first column from every column corresponding to a border point in any block.
In this way, $\RJId-A_f$ is transformed into:
\begin{displaymath}
\left(\!\!\! \begin{array}{c | c | c c c c  | c c c c | c  }
  1-\gamma &    & -\alpha_j & 0 & \cdots  & 0 &  -\beta_j & 0 & \cdots   & -1  & \\
  \hline
   &\!\ddots\!       &         &  &  &   & & & & &  \\
   \hline
  0  &       & 1         & -\gamma_{j,2} & \cdots  & 0    & & & & & \\
  0   &      & 0         & 1     &                & 0    & & & & &  \\
  \vdots& & \vdots &        & \ddots      & \vdots     & &  & 0 & & \\ 
  0  &       & 0         & 0     &                & -\gamma_{j,r_j} & & & &  &  \\ 
  -\beta_j  &       & 0         & 0     & \cdots  & 1    & \beta_j  & 0 & \cdots & 0 &  \\   
  \hline
  0   &    & &    & & & 1 & -\tilde \alpha_j &  \cdots & 0 &  \\
  0    &   & &    & & & 0 & 1     &             & 0 &  \\
  \vdots & & & & 0 & & \vdots   &        &      \ddots &   \vdots   &  \\
  0   &    & &    & & & 0 & 0     &             &  -\gamma_{j,{r_j-1}}  & \\
  -\gamma_{j,r_j}  & & &    & & & 0 & 0 & \cdots &  1  & \\  
  \hline
   &     & &    & & &  &     &  & & \! \ddots  \\     
 \end{array} \!\!\! \right) \; .
 \end{displaymath} 
By using row and column addition, and by disregarding rows and columns where the only non--zero entry is a diagonal $1$, $\RJId - A$ can be further reduced to 
\begin{displaymath}
\left( \begin{array}{c c c c c c}
S                           & t_1  & t_2  & \cdots & t_m          \\
-\gamma_{1,r_1}   & 1     & 0     &            &  0               \\
-\gamma_{2,r_2}   & 0     & 1     &            &  0               \\
 \vdots                   &        &        & \ddots &  \vdots       \\
-\gamma_{m,r_m} & 0     & 0     & \cdots &  1 \\
\end{array}\right)
\quad \begin{array}{l}
b_j = \alpha_j \beta_j \gamma_{j,2} \cdots \gamma_{j,r_j} \\ \\
t_j = \tilde \alpha_j \gamma_{j,2} \cdots \gamma_{j,r-1} (b_j-\beta_j) -1 \\ \\
S = 1 - \gamma - \sum_{j=1}^m b_j
\end{array}
\; .
\end{displaymath}
Hence, the Bowen--Franks group is cyclic and the determinant is 
\begin{equation}
\label{RJ_eq_entropy_det}
\det( \RJId - A_f ) = 1 - \gamma + \sum_{j=1}^m \left(  \gamma_{j,r_j} t_j  - b_j \right)\; . 
\end{equation}
\end{proof}

\noindent With the results of \cite{RJ_hong_shin}, this gives the following result.

\begin{corollary}
When $\log \lambda$ is the entropy of an SFT, there exists an SFT renewal system $X(L)$  with cyclic Bowen--Franks group such that $h(\mathsf{X}(L)) = \log \lambda$.
\end{corollary}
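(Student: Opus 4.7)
The plan is to combine the existing Theorem \ref{RJ_thm_H_classification} with the result of Hong and Shin quoted just above Remark \ref{RJ_rem_H_tilde_fe}. The only real content of the corollary is that the two statements mesh: Hong--Shin supply a renewal system in the class $H$ with prescribed entropy, and the theorem guarantees that every renewal system in $H$ has cyclic Bowen--Franks group.

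First I would recall the characterisation of entropies of SFTs (due to Lind): $\log\lambda$ is the entropy of some SFT if and only if $\lambda$ is a weak Perron number, i.e.\ a positive real root of a monic integer polynomial which dominates the moduli of all the other roots of that polynomial raised to some positive integer power. Granted this, the hypothesis on $\lambda$ in the corollary is exactly the hypothesis needed to invoke Hong--Shin.

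Next I would apply the result stated in the paragraph preceding Remark \ref{RJ_rem_H_tilde_fe}: for every weak Perron number $\lambda$ there exists $L \in H$ such that $\mathsf{X}(L)$ is an SFT with $h(\mathsf{X}(L)) = \log\lambda$. This produces the candidate renewal system.

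Finally, Theorem \ref{RJ_thm_H_classification} asserts that for every $L \in H$ the renewal system $\mathsf{X}(L)$ has cyclic Bowen--Franks group (with determinant given by \eqref{RJ_eq_entropy_det}, though this extra information is not needed for the corollary). Combining the two statements gives the desired $\mathsf{X}(L)$. There is no real obstacle here---the work has already been done in Lemmas \ref{RJ_lem_entropy_fe}--\ref{RJ_lem_entropy_cyclic} and Theorem \ref{RJ_thm_H_classification}; the corollary is simply the punchline packaging Hong--Shin's realisation result together with the cyclicity of the Bowen--Franks group established in the theorem.
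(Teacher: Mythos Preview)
Your proposal is correct and matches the paper's approach exactly: the paper simply states that the corollary follows by combining Theorem \ref{RJ_thm_H_classification} with the Hong--Shin result from \cite{RJ_hong_shin}, and you have spelled out that combination (plus the implicit identification of SFT entropies with logarithms of weak Perron numbers).
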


\section{Towards the range of the Bowen--Franks invariant}
\label{RJ_sec_rs_range}
In the following, it will be proved that the range of the Bowen--Franks invariant over the class of SFT renewal systems contains a large class of pairs of signs and finitely generated abelian groups. 
First, the following special case will be used to show that every integer is the determinant of an SFT renewal system.

\index{renewal system!with positive determinant}
\begin{example}
\label{RJ_ex_pos_det}
Consider the generating list \fxnote{Corrected the generating list}
\begin{equation}\label{RJ_eq_pos_det}
   L = \{ a, \alpha, \tilde \alpha, \gamma, \alpha \gamma \beta, \beta \tilde \alpha \gamma \} \; .
\end{equation}
By Lemma \ref{RJ_lem_entropy_lfc}, $L$ is left--modular, $\mathsf{X}(L)$ is an SFT, and the symbolic adjacency matrix of the left Fischer cover of $\mathsf{X}(L)$ is  
\begin{equation}
\label{RJ_eq_pos_det_adj}
A = \left( \begin{array}{c | c c | c c}
       a +\alpha+\tilde \alpha & \alpha    & 0 & a+\alpha +\tilde \alpha+\beta & a+\alpha \\
       \hline
       0                   &   0      & \gamma &  0                         &   0          \\
       \beta             &   0      & 0            &  0                         &   \beta     \\
\hline
       0                   &   0      & 0            &  0                         &   \tilde \alpha        \\
       \gamma        &   0      & 0            &  \gamma              &  \gamma                                      
\end{array} \right) \; .
\end{equation}
\noindent
By fragmenting $\mathsf{X}(L)$, it is possible to construct an SFT renewal system for which the (non--symbolic) adjacency matrix of the underlying graph of the left Fischer cover has this form with $a, \alpha, \tilde \alpha, \beta, \gamma \in \mathbb{N}$ as described in Remark \ref{RJ_rem_fragmentation}. Let $A_f$ be such a matrix. This is a special case of the shift spaces considered in Theorem \ref{RJ_thm_H_classification}, so the Bowen--Franks group is cyclic and the determinant is 
$
   \det(\RJId - A_f) =  \beta \alpha \tilde \alpha \gamma^2 - \alpha \beta  \gamma - \tilde \alpha \beta  \gamma -\alpha - \tilde \alpha-\gamma-a +1
$.
\end{example} 


\begin{theorem}\label{RJ_thm_rs_det_range}
Any $k \in \mathbb{Z}$ is the determinant of an SFT renewal system with cyclic Bowen--Franks group.
\end{theorem}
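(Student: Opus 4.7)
The plan is to invoke Example \ref{RJ_ex_pos_det} directly: the generating list $L$ from (\ref{RJ_eq_pos_det}) is of the form covered by Theorem \ref{RJ_thm_H_classification} (it is $\{a\} \cup L_1$ with $L_1 \in R$ given by $r=2$, $\gamma_2 = \gamma$), and therefore any fragmentation $X_f$ of $\mathsf{X}(L)$ is an SFT renewal system whose Bowen--Franks group is cyclic. By Remark \ref{RJ_rem_fragmentation}, each tuple of positive integers $(a,\alpha,\tilde\alpha,\beta,\gamma) \in \mathbb{N}^5$ gives rise to such a fragmentation whose Fischer cover has as adjacency matrix the numerical specialization of (\ref{RJ_eq_pos_det_adj}), and the calculation in Example \ref{RJ_ex_pos_det} then yields the determinant
\[
D(a,\alpha,\tilde\alpha,\beta,\gamma) = \beta\alpha\tilde\alpha\gamma^2 - \alpha\beta\gamma - \tilde\alpha\beta\gamma - \alpha - \tilde\alpha - \gamma - a + 1 .
\]

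It then suffices to show that $D\colon \mathbb{N}^5 \to \mathbb{Z}$ is surjective. The simplest route is to specialize four of the five variables so that the resulting one-parameter family is already onto $\mathbb{Z}$. For instance, setting $\beta = \gamma = 1$ and $\tilde\alpha = 3$ collapses the formula to $D = \alpha - a - 6$. Given any $k \in \mathbb{Z}$, I would take $(\alpha,a) = (k+7,1)$ when $k \geq -6$ and $(\alpha,a) = (1,-k-5)$ when $k \leq -7$; in both cases $\alpha, a \in \mathbb{N}$ and $D = k$.

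There is no real obstacle here beyond bookkeeping: the heavy lifting (cyclicity of the Bowen--Franks group, explicit shape of the left Fischer cover, and the closed form for the determinant) is already done in Lemma \ref{RJ_lem_entropy_lfc}, Lemma \ref{RJ_lem_entropy_cyclic}, and Theorem \ref{RJ_thm_H_classification}. All that remains is the elementary observation that the resulting integer polynomial in the fragmentation parameters attains every value in $\mathbb{Z}$, which the one-parameter specialization above makes transparent.
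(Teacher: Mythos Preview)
Your proof is correct and follows essentially the same approach as the paper: invoke Example \ref{RJ_ex_pos_det} (which already records that these fragmentations are SFTs with cyclic Bowen--Franks group and computes the determinant), then specialize parameters to make surjectivity onto $\mathbb{Z}$ transparent. The paper specializes $\alpha=\tilde\alpha=\beta=1$ to obtain $\gamma^2-3\gamma-a-1$, whereas you specialize $\beta=\gamma=1,\ \tilde\alpha=3$ to obtain $\alpha-a-6$; both choices work and the difference is cosmetic.
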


\begin{proof}
Consider the renewal system from Example \ref{RJ_ex_pos_det} in the case $\alpha = \tilde \alpha = \beta = 1$, where the determinant is
$   \det(\RJId - A_f)   =  \gamma^2 - 3\gamma - a -1$,
and note that the range of this polynomial is $\mathbb{Z}$.
\end{proof}

\index{renewal system!with non--cyclic group}
\index{X@$\RJXd{n_1,\dots,n_k}$}
All renewal systems considered until now have had cyclic Bowen--Franks groups, so the next goal is to construct a class of renewal systems exhibiting non--cyclic groups. 
Let $k \geq 2$, $\mathcal{A} = \{ a_1, \ldots , a_k \}$, and let $n_1, \ldots , n_k \geq 2$ with $\max_{i}\{n_{i}\} > 2$.
The goal is to define a generating list, $L$, for which $\mathsf X(L) = \mathsf X_\mathcal F$ with $\mathcal{F} = \{ a_i^{n_{i}}  \}$. For each $1 \leq i \leq k$, define
\begin{multline}
\label{RJ_eq_Ld}
  L_i = \{  a_j a_i^l \mid j \neq i \textrm{ and } 0 < l < n_i -1\}  \\ \cup 
            \{  a_m a_j a_i^l \mid m \neq j \neq i \textrm{ and } 0 < l < n_i -1\} \; .
\end{multline}
Define $L = \bigcup_{i=1}^k L_i \neq \emptyset$, and $\RJXd{n_1,\dots,n_k} = \mathsf{X}(L)$.

\begin{lemma}
\label{RJ_lem_Xd}
Define the renewal system $\RJXd{n_1,\dots,n_k}$ as above. Then $\RJXd{n_1,\dots,n_k} = \mathsf X_\mathcal F$ with $\mathcal F = \{ a_i^{n_{i}}  \}$, so $\RJXd{n_1,\dots,n_k}$ is an SFT. The symbolic adjacency matrix of the left Fischer cover of $\RJXd{n_1,\dots,n_k}$ is the matrix in (\ref{RJ_eq_diag_A}). 
\end{lemma}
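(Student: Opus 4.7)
The plan is to establish the equality $\mathsf{X}(L) = \mathsf{X}_\mathcal{F}$ and then read off the left Fischer cover, which is then determined by $\mathsf{X}_\mathcal{F}$. Since $\mathcal{F}$ is finite, the equality immediately gives that $\mathsf{X}(L)$ is an SFT.

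First I would prove the easy inclusion $\mathsf{X}(L) \subseteq \mathsf{X}_\mathcal{F}$ by a short book-keeping argument. Every word $w \in L_i$ has the form $u \cdot a_i^l$ where $|u| \in \{1,2\}$, $l \leq n_i - 2$, and the last letter of $u$ differs from $a_i$; the constraint $m \neq j$ in the three-letter form $a_m a_j a_i^l$ further forbids $u$ from containing two identical letters. Consequently, concatenating two $L$-words can extend an $a_i$-run by at most one letter across the boundary, producing a maximal run of length at most $(n_i - 2) + 1 = n_i - 1 < n_i$, so no element of $L^*$ contains $a_i^{n_i}$ as a factor.

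For the reverse inclusion I would exhibit an $L^*$-parse of a two-sided padding of any $w \in \mathcal{B}(\mathsf{X}_\mathcal{F})$, showing that $w$ is a factor of some element of $L^*$. Decompose $w$ into maximal runs $a_{i_1}^{l_1} \cdots a_{i_r}^{l_r}$ with $1 \leq l_s \leq n_{i_s} - 1$ and cover them run by run: a run of length $l_s \leq n_{i_s} - 2$ is absorbed by the two-letter word $a_{i_{s-1}} a_{i_s}^{l_s} \in L_{i_s}$ (taking one letter from the end of the previous run as its prefix), while a run of maximal length $l_s = n_{i_s} - 1$ is split between two words, the last $a_{i_s}$ serving as the first prefix letter of the next word. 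The three-letter form $a_m a_j a_i^l$ is needed precisely when the preceding run has length $1$ and so cannot donate its last letter while leaving a nonempty suffix run for the previous word; in that case two letters from the earlier runs are absorbed instead. The assumption $\max_i n_i > 2$ ensures that $L$ is non-empty and that suitable padding is available on both sides of $w$. The main obstacle here will be the careful case analysis at the ends of $w$ and at runs of short length; each configuration must be checked to yield a valid element of $L^*$.

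For the Fischer cover, since $\mathsf{X}_\mathcal{F}$ is an irreducible SFT every right-ray is intrinsically synchronising, so the left Fischer cover coincides with the left Krieger cover. Each right-ray $x^+$ begins with a unique maximal initial run $a_i^j$ with $1 \leq j \leq n_i - 1$, and $P_\infty(x^+)$ depends only on the pair $(i,j)$: it consists of exactly those left-rays whose terminal $a_i$-run has length at most $n_i - 1 - j$. This gives $\sum_{i=1}^k (n_i - 1)$ distinct predecessor sets. Since every right-ray with predecessor set $(i,j)$ has $a_i$ as its first letter, vertex $(i,j)$ emits only $a_i$-labelled edges, going to $(i, j-1)$ when $j \geq 2$ and to $(m, j'')$ for every $m \neq i$ and $1 \leq j'' \leq n_m - 1$ when $j = 1$. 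Grouping these edges in blocks indexed by $i$ produces a block matrix whose diagonal blocks are subdiagonal with entry $a_i$ (of size $(n_i - 1) \times (n_i - 1)$) and whose off-diagonal block from row-block $i$ to column-block $i' \neq i$ has $a_i$ in its entire first row and zeros elsewhere, recovering the matrix displayed in (\ref{RJ_eq_diag_A}).
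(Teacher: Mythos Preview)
Your proposal is correct and follows essentially the same strategy as the paper: both prove $\mathsf{X}(L)\subseteq\mathsf{X}_\mathcal{F}$ by checking that no concatenation of $L$-words contains $a_i^{n_i}$, prove the reverse inclusion by parsing an arbitrary $\mathcal{F}$-avoiding word along its maximal runs (the paper compresses this into two sentences, while you spell out more explicitly when the three-letter words $a_m a_j a_i^l$ are needed), and then identify the vertices of the Fischer cover with the pairs $(i,j)$ recording the initial $a_i$-run of a right-ray. Your description of the predecessor sets and the resulting edge structure matches the paper's computation in (\ref{RJ_eq_rs_diag}) and Figure~\ref{RJ_fig_Xd_LFC} exactly.
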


\begin{proof}
Note that for each $i$, $a_i^{n_{i}} \notin \mathcal B (\RJXd{n_1,\dots,n_k})$ by construction. For $1 < l < n_i-1$ and $j \neq i$ the word $a_j a_i^{l}$ has a partitioning in $\RJXd{n_1,\dots,n_k}$ with empty beginning and end. Hence, $a_{i_1} a_{i_2}^{l_2} a_{i_3}^{l_3} \cdots a_{i_m}^{l_m}$ has a partitioning with empty beginning and end whenever $i_j \neq i_{j+1}$, $1 < l_j < n_{i_j}$ for all $1 < j < m$, and $0 < l_m < n_{i_m} -1$. Given $i_1, \ldots , i_m \in \{1, \ldots, k\}$ with $i_j \neq i_{j+1}$ and $m \geq 2$, the word $a_{i_1} a_{i_2} \cdots a_{i_m}$ has a partitioning with empty beginning and end. Hence, every word that does not contain one of the words $ a_i^{n_{i}}$ has a partitioning, so $\RJXd{n_1,\dots,n_k} = \mathsf X_\mathcal F$ for $\mathcal F = \{ a_i^{n_{i}}\}$. 

\begin{figure}[b]
\vspace{0.8cm}
\begin{center}
\begin{tikzpicture}
 [bend angle=10,
   clearRound/.style = {circle, inner sep = 0pt, minimum size = 17mm},
   clear/.style = {rectangle, minimum width = 17 mm, minimum height = 6 mm, inner sep = 0pt},  
   greyRound/.style = {circle, draw, minimum size = 1 mm, inner sep =
      0pt, fill=black!10},
   grey/.style = {rectangle, draw, minimum size = 6 mm, minimum height = 8mm, inner sep =
      1pt, fill=black!10},
   white/.style = {rectangle, draw, minimum size = 6 mm, minimum height = 8mm, inner sep =
      1pt},
   to/.style = {->, shorten <= 1 pt, >=stealth', semithick}]
  
  \node[white] (an1) at (0,2) {$P_\infty \left(a_i^{n_i-1} \right)$};
  \node[white] (an2) at (3,2) {$P_\infty \left(a_i^{n_i-2}\right)$};
  \node[clear] (dots) at (6,2) {$\cdots$};  
  \node[grey] (a) at (9,2) {$P_\infty \left(a_i\right)$}; 

  \node[white] (bn1) at (0, 0) {$P_\infty \left(a_j^{n_j-1}\right)$};
  \node[white] (bn2) at (3,0) {$P_\infty \left(a_j^{n_j-2}\right)$};
  \node[clear] (dots2) at (6,0) {$\cdots$};  
  \node[grey] (b) at (9,0) {$P_\infty \left(a_j \right)$};

  \draw[to] (an1) to node[auto] {$a_i$} (an2);
  \draw[to] (an2) to node[auto] {$a_i$} (dots);
  \draw[to] (dots) to node[auto] {$a_i$} (a);  

  \draw[to] (bn1) to node[auto,swap] {$a_j$} (bn2);
  \draw[to] (bn2) to node[auto,swap] {$a_j$} (dots2);
  \draw[to] (dots2) to node[auto,swap] {$a_j$} (b);  

  \draw[to, bend left] (a) to node[auto] {$a_i$} (b);
  \draw[to] (a) to node[auto, near end] {$a_i$} (bn2);
  \draw[to] (a) to node[auto,swap, very near end] {$a_i$} (bn1);  

  \draw[to, bend left] (b) to node[auto] {$a_j$} (a);
  \draw[to] (b) to node[auto,swap, near end] {$a_j$} (an2);
  \draw[to] (b) to node[auto,very near end] {$a_j$} (an1);  

\end{tikzpicture}
\end{center}
\caption[Building non--cyclic Bowen--Franks groups.]{Part of the left Fischer cover of $\RJXd{n_1,\dots,n_k}$.
The entire graph can be found by varying $i$ and $j$.
The border points are  coloured grey.} 
\label{RJ_fig_Xd_LFC}
\end{figure}
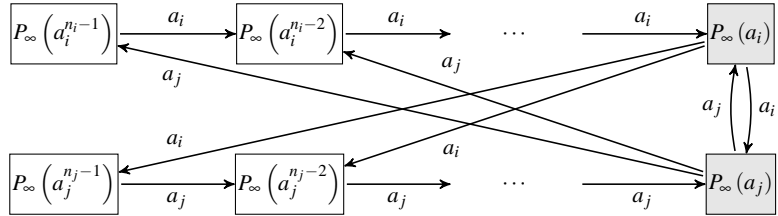

To find the left Fischer cover of  $\RJXd{n_1,\dots,n_k}$, it is first necessary to determine the predecessor sets. Given $1 \leq i \leq k$ and $j \neq i$ 
\begin{align}
	P_\infty(a_i a_j \cdots) &= \{ x^- \in \RJXd{n_1,\dots,n_k}^-  |  x_{-n_i +1} \cdots x_0  \neq a_i^{n_i-1} \} \nonumber \\
	P_\infty(a_i^2 a_j \cdots) &= \{ x^- \in \RJXd{n_1,\dots,n_k}^-  |  x_{-n_i +2} \cdots x_0  \neq a_i^{n_i-2} \} \label{RJ_eq_rs_diag}\\
	&\vdots  \nonumber \\
	P_\infty(a_i^{n_i-1} a_j \cdots) &= \{ x^- \in \RJXd{n_1,\dots,n_k}^-  |  x_0  \neq a_i \} \nonumber .
\end{align}
Only the first of these predecessor sets is a border point. 
Equation \ref{RJ_eq_rs_diag} gives all the information necessary to draw the left Fischer cover of $\RJXd{n_1,\dots,n_k}$. A part of the left Fischer cover is shown in Fig.\ \ref{RJ_fig_Xd_LFC},
and the corresponding symbolic adjacency matrix is: \vspace{-0.5 em}\begin{align}
 	\nonumber
	&\hspace{14 pt}
	\begin{array}{c c c c}
       \overbrace{
       	\phantom{\begin{matrix}
			a_1 &  \cdots & a_1 & a_1
		\end{matrix}}
	}^{n_1-1}		
	&  
  	\overbrace{
       	\phantom{\begin{matrix}
			a_2 &  \cdots & a_2 & a_2
		\end{matrix}}
	}^{n_2-1}
	&
	\phantom{\cdots}
	&		
	\overbrace{
       	\phantom{\begin{matrix}
			a_k &  \cdots & a_k & a_k
		\end{matrix}}
	}^{n_k-1}
	\end{array} \\[-1.5em]
       &\left( \begin{array}{ c  | c |  c |  c } 
	\begin{matrix}
	0 &   \cdots & 0 & 0 \\
	a_1 &  \cdots & 0 & 0 \\
	   &     \ddots &    &    \\
	0 &   \cdots & a_1 & 0		 
	\end{matrix}		
	&
	\begin{matrix}
	a_1 &   \cdots & a_1 & a_1\\
	0 &   \cdots & 0 & 0	\\
	   &   \ddots &    &    \\
	0 &  \cdots & 0 & 0	
	\end{matrix}		
	&
	\cdots
	&
	\begin{matrix}
	a_1 &  \cdots & a_1 &  a_1\\
	0 &  \cdots & 0 & 0	\\
	   &  \ddots &    &    \\
	0 &  \cdots & 0 & 0	
	\end{matrix}		\\
	\hline
	\begin{matrix}
	a_2 &  \cdots & a_2 &  a_2\\
	0 &   \cdots & 0 & 0	\\
	   &   \ddots &    &    \\
	0 &  \cdots & 0 & 0	
	\end{matrix}		
	&
	\begin{matrix}
	0 &  \cdots & 0 & 0 \\
	a_2 &  \cdots & 0 & 0 \\
	   &   \ddots &    &    \\
	0 &  \cdots & a_2 & 0		 
	\end{matrix}		
	&
	\cdots
	&
	\begin{matrix}
	a_2 & \cdots & a_2 &  a_2\\
	0 & \cdots & 0 & 0	\\
	   &  \ddots &    &    \\
	0 &  \cdots & 0 & 0	
	\end{matrix}		\\
	\hline
	\vdots & \vdots & \ddots & \vdots \\
	\hline
	\begin{matrix}
	a_k &   \cdots & a_k &  a_k\\
	0 &  \cdots & 0 & 0	\\
	   &   \ddots &    &    \\
	0 &  \cdots & 0 & 0	
	\end{matrix}		
	&
	\begin{matrix}
	a_k &  \cdots & a_k &  a_k\\
	0 &  \cdots & 0 & 0	\\
	   &  \ddots &    &    \\
	0 & \cdots & 0 & 0	
	\end{matrix}		
	&
	\cdots
	&
	\begin{matrix}
	0 &  \cdots & 0 & 0 \\
	a_k &  \cdots & 0 & 0 \\
	   &     \ddots &    &    \\
	0 &   \cdots & a_k & 0		 
	\end{matrix}		\\
\end{array} \right) \; . \label{RJ_eq_diag_A} 
\end{align} \end{proof}

Let $A$ be the (non--symbolic) adjacency matrix of the underlying graph of the left Fischer cover of $\RJXd{n_1,\dots,n_k}$ constructed above.
Then it is possible to do the following transformation by row and column addition  
\begin{align*}
	\RJId - A  &\rightsquigarrow 
	\begin{pmatrix}
	1        & 1-n_2 & 1-n_3 & \cdots  & 1-n_k  \\
	1-n_1 & 1        & 1-n_3 & \cdots  & 1-n_k  \\
	1-n_1 & 1-n_2 & 1        & \cdots  & 1-n_k  \\
       \vdots & \vdots & \vdots & \ddots & \vdots     \\
	1-n_1 & 1-n_2 & 1-n_3 & \cdots  & 1 		 
	\end{pmatrix} 
	\rightsquigarrow
	\begin{pmatrix}
	x      & 1       & 1       & \cdots  & 1  \\
	-n_1 & n_2  & 0       & \cdots  & 0  \\
	-n_1 & 0       & n_3  & \cdots  & 0  \\
       \vdots & \vdots & \vdots & \ddots & \vdots     \\
	-n_1 & 0       & 0 & \cdots  & n_k 		 
	\end{pmatrix} \; ,
\end{align*}
where $x = 1-(k-1) n_1$. The determinant of this matrix is
\begin{displaymath}
\det(\RJId - A) = n_2 \cdots n_k \left( x + \sum_{i=2}^k \frac{n_1}{n_i} \right) 
                   = - n_1 n_2 \cdots n_k \left( k-1 - \sum_{i=1}^k \frac{1}{n_i} \right)
                   < 0 \; .
\end{displaymath}
The inequality is strict since $k-1 - \sum_{i=1}^k \frac{1}{n_i} > \frac{k}{2} -1 \geq 0$.
Given concrete $n_1, \ldots, n_k$, it is straightforward to compute the Bowen--Franks group of $\RJXd{n_1, \ldots, n_k}$, but it has not been possible to derive a general closed form for this group.

\begin{proposition}
\label{RJ_prop_non-cyclic_BF}
Let $n_1, \ldots, n_k \geq 2$ with $n_i | n_{i-1}$ for $2 \leq i \leq k$ and $n_1 > 2$. Let $m = n_1 n_2 ( k-1 - \sum_{i=1}^k \frac{1}{n_i} )$, then $\RJBF_+(\RJXd{n_1, \ldots, n_k}) = - \mathbb{Z} /m\mathbb{Z} \oplus \mathbb{Z} /n_3\mathbb{Z} \oplus \cdots \oplus \mathbb{Z} / n_k\mathbb{Z}$.
\end{proposition}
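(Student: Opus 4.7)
The sign of the determinant is shown to be $-1$ just above the statement, so it suffices to compute the Bowen--Franks group. My plan is to continue the row/column reduction of $\RJId - A$ already begun before the proposition until the matrix is diagonal, exploiting the divisibility chain $n_k \mid n_{k-1} \mid \cdots \mid n_1$ at the key steps.

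First, I use the $1$ at position $(1,2)$ of the reduced matrix (with $x = 1-(k-1)n_1$ in the top-left corner) to clear the first row and column by standard elementary row/column operations; after one column swap this yields a block sum $\RJdiag(1) \oplus B$, where $B$ is the $(k-1)\times(k-1)$ matrix with first row $(-n_1-xn_2,\,-n_2,\,\ldots,\,-n_2)$, first column $(-n_1-xn_2,\,-n_1,\,\ldots,\,-n_1)^T$, diagonal entries $n_3,\ldots,n_k$ in positions $(2,2),\ldots,(k-1,k-1)$, and zeros elsewhere. The Smith normal form of $\RJId-A$ will then be $\RJdiag(1)$ concatenated with the SNF of $B$.

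The key step is to exploit the divisibility. Since $n_{j+1}\mid n_2$ for each $j=2,\ldots,k-1$, I may add $n_2/n_{j+1}$ copies of row $j$ of $B$ to row $1$, which clears the $(1,j)$ entry; a direct calculation shows the new $(1,1)$ entry collapses to
\begin{displaymath}
-n_1 - xn_2 - n_1\sum_{j=2}^{k-1}\frac{n_2}{n_{j+1}} \;=\; n_1 n_2\!\left(k-1 - \sum_{i=1}^{k}\frac{1}{n_i}\right) \;=\; m,
\end{displaymath}
after substituting $x = 1-(k-1)n_1$. Symmetrically, since $n_{j+1}\mid n_1$, I then add $n_1/n_{j+1}$ copies of column $j$ to column $1$ for each $j=2,\ldots,k-1$, which clears the $(j,1)$ entry without disturbing the already zero entries in row $1$. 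The matrix is now $\RJdiag(m,\,n_3,\,\ldots,\,n_k)$.

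To finish, I check that $n_3 \mid m$ by using $n_3 \mid n_1$, $n_3 \mid n_2$ and $n_3 \mid n_1 n_2 / n_i$ for each $i\ge 3$, all of which follow from the divisibility chain (write $n_2 = n_3 c$ and $n_i \mid n_3$ for $i \ge 3$). Combined with $n_k \mid n_{k-1} \mid \cdots \mid n_3$ this shows $\RJdiag(n_k,\,n_{k-1},\,\ldots,\,n_3,\,m)$ is already the Smith normal form, so the Bowen--Franks group is $\mathbb{Z}/m\mathbb{Z} \oplus \mathbb{Z}/n_3\mathbb{Z} \oplus \cdots \oplus \mathbb{Z}/n_k\mathbb{Z}$; together with the negative determinant established earlier this yields the claim. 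The main obstacle is the bookkeeping in the displayed calculation showing that the modified $(1,1)$ entry equals precisely $m$; once that identity is verified, the rest is routine integer linear algebra.
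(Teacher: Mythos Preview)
Your proof is correct and follows essentially the same approach as the paper: both continue the row/column reduction of $\RJId - A$ using the divisibility chain to reach the diagonal form $\RJdiag(1,m,n_3,\ldots,n_k)$. The only difference is the order of elementary operations (the paper first clears the $-n_1$ entries in the first column using $n_j\mid n_1$, obtaining $y=-n_1(k-1-\sum_i 1/n_i)$ in the top-left corner, and only then pivots on the $1$ in row~$1$), and your version adds the explicit check that $n_3\mid m$, which the paper leaves implicit.
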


\begin{proof}
By the arguments above, $\RJXd{n_1, \ldots , n_k}$ is conjugate to an edge shift with adjacency matrix $A$ such that the following transformation can be carried out by row and column addition
\begin{displaymath}
	\RJId - A \rightsquigarrow 
	\begin{pmatrix}
	y   & 1       & 1       & \cdots  & 1  \\
	0 & n_2  & 0       & \cdots  & 0  \\
	 0   & 0   & n_3  & \cdots  & 0  \\
       \vdots & \vdots & \vdots & \ddots & \vdots     \\
	 0   & 0       & 0 & \cdots  & n_k 		 
	\end{pmatrix}
	\rightsquigarrow
	\begin{pmatrix}
	 0   & 1       & 0   & \cdots  & 0  \\
	m &  0  & 0       & \cdots  & 0  \\
	 0   & 0   & n_3  & \cdots  & 0  \\
       \vdots & \vdots & \vdots & \ddots & \vdots     \\
	 0   & 0       & 0 & \cdots  & n_k 		 
	\end{pmatrix} \; ,	
\end{displaymath}
where $y = -n_1 \left( k-1- \sum_{i=1}^k 1/n_i \right)$.	
It follows that the Smith normal form of $\RJId - A$ is $\RJdiag(m, n_3, \ldots , n_k)$, and $\det(\RJId - A) < 0$.
\end{proof}

Let  $G$ be a finite direct sum of finite cyclic groups. Then Proposition \ref{RJ_prop_non-cyclic_BF} shows that $G$ is a subgroup of the Bowen--Franks group of some SFT renewal system, but it is still unclear whether $G$ itself is also the Bowen--Franks group of a renewal system since the term $\mathbb{Z} / m\mathbb{Z}$ in the statement of Proposition \ref{RJ_prop_non-cyclic_BF}  is determined by the other terms. Furthermore, the groups constructed in Proposition \ref{RJ_prop_non-cyclic_BF} are all finite. Other techniques can be used to construct renewal systems with groups such as $\mathbb{Z} / (n+1) \oplus \mathbb{Z}$ \cite[Ex. 5.54]{RJ_johansen_thesis}.

\index{renewal system!with positive determinant}
\index{renewal system!with non--cyclic group}
The determinants of all the renewal systems with non--cyclic Bowen--Franks groups considered above were negative or zero, so the next goal is to construct a class of SFT renewal systems with positive determinants and non--cyclic Bowen--Franks groups.

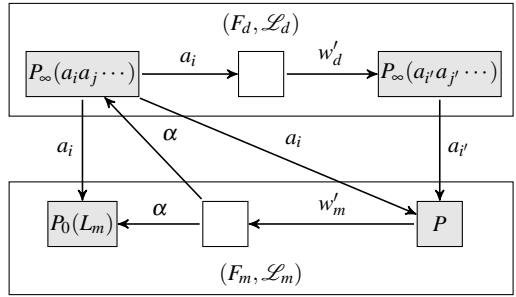
\begin{figure}[b]
\begin{tikzpicture}
  [bend angle=30,
   clearRound/.style = {circle, inner sep = 0pt, minimum size = 17mm},
   clear/.style = {rectangle, minimum width = 5 mm, minimum height = 5 mm, inner sep = 0pt},  
   greyRound/.style = {circle, draw, minimum size = 1 mm, inner sep =
      0pt, fill=black!10},
   grey/.style = {rectangle, draw, minimum size = 6 mm, inner sep =
      1pt, fill=black!10},
    white/.style = {rectangle, draw, minimum size = 6 mm, inner sep =
      1pt},
  whiteBig/.style = {rectangle, draw, minimum height = 1.5 cm, minimum width = 6.7 cm, inner sep =
      1pt},
   to/.style = {->, shorten <= 1 pt, >=stealth', semithick},
   scale=0.95] 

  \node[whiteBig] (Fd) at (0,1.5)   {};
  \node[whiteBig] (Fm) at (0,-1) {};
  \node (space) at (4.5,0) {};
  
  \node[grey] (Pi) at (-2.5,1.3) {$P_\infty(a_ia_j \cdots)$};
  \node[white] (Q') at (0,1.3) {};  
  \node[grey] (Pi') at (2.5,1.3) {$P_\infty(a_{i'}a_{j'} \cdots)$};
  
  \node[grey] (P0) at (-2.5,-0.8) {$P_0(L_m)$};
  \node[grey] (P) at (2.5,-0.8) {$P$};
  \node[white] (Q) at (-0.5,-0.8) {};  
  \node[clear] (Fdtext) at (0,2) {$(F_d, \mathcal{L}_d)$};  
  \node[clear] (Fmtext) at (0,-1.5) {$(F_m, \mathcal{L}_m)$};    
   
  \draw[to] (Pi) to node[auto] {$a_i$} (P);
  \draw[to] (Pi) to node[auto,swap] {$a_i$} (P0);
  \draw[to] (Pi) to node[auto] {$a_i$} (Q');
  \draw[to] (Q') to node[auto] {$w'_d$} (Pi');
  \draw[to] (Pi') to node[auto] {$a_{i'}$} (P);      
  \draw[to] (P) to node[auto,swap] {$w'_m$} (Q);
  \draw[to] (Q) to node[auto,swap] {$\alpha$} (P0);
  \draw[to] (Q) to node[auto,swap] {$\alpha$} (Pi);

\end{tikzpicture}
\hspace{\stretch{1}}
\sidecaption
\caption[Construction of the Fischer cover of a sum.]{Construction of the left Fischer cover considered in Lemma \ref{RJ_lem_addition_Xd}. Here, $w'_m \alpha = w_m \in L_m^*$ and $w'_d a_{i'} = w_d \in \mathcal{B}(\RJXd{n_1, \ldots, n_k})$ with $\RJleftl(w_d) \neq a_i$. Border points are coloured grey.} 
\label{RJ_fig_addition_Xd}
\end{figure}

\begin{lemma}
\label{RJ_lem_addition_Xd}
Let $L_d$ be the generating list of $\RJXd{n_1, \ldots , n_k}$ as defined in (\ref{RJ_eq_Ld}), and let $(F_d, \mathcal{L}_d)$ be the left Fischer cover of $\RJXd{n_1, \ldots , n_k}$.
Let $L_m$ be a left--modular generating list for which $\mathsf{X}(L_m)$ is an SFT with left Fischer cover $(F_m, \mathcal{L}_m)$. 
For $L_{d+m} = L_d \cup L_m \cup_{i=1}^k \{ a_i w \mid w \in L_m \}$,   $\mathsf{X}(L_{d+m})$ is an SFT for which the left Fischer cover is obtained by adding the following connecting edges to the disjoint union of $(F_d, \mathcal{L}_d)$ and $(F_m, \mathcal{L}_m)$ (sketched in Fig. \ref{RJ_fig_addition_Xd}):
\begin{itemize}
\item For each $1 \leq i \leq k$ and each $e \in F_m^0$ with $r(e) = P_0(L_m)$ draw an edge $e_i$ with $s(e_i) = s(e)$ and $r(e_i) = P_\infty(a_i a_j \ldots)$  labelled $\mathcal{L}_m(e)$.
\item For each $1 \leq i \leq k$ and each border point $P \in F_m^0$ draw an edge labelled $a_i$ from $P_\infty(a_i a_j \ldots)$ to  $P$.
\end{itemize}
\end{lemma}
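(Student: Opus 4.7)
The plan is to verify that the labelled graph $(F_+, \mathcal{L}_+)$ described in the statement is the left Fischer cover of $\mathsf{X}(L_{d+m})$, adapting the strategy of Proposition \ref{RJ_prop_addition_modular} to the setting where $\RJXd{n_1,\ldots,n_k}$ carries several border points $P_\infty(a_i a_j \cdots)$ taking on the role that the universal border point played in the modular case. First I would check that $(F_+, \mathcal{L}_+)$ is irreducible, left-resolving, and predecessor-separated, so that by \cite[Cor. 3.3.19]{RJ_lind_marcus} it is the left Fischer cover of some sofic shift $X_+$. Left-resolvingness follows from the disjointness of the alphabets of $L_d$ and $L_m$: connecting edges labelled $a_i$ land at border points of $F_m$ which previously received no $\mathcal{A}(\RJXd{n_1,\ldots,n_k})$-labelled edges, and connecting edges labelled by letters of $\mathcal{A}(\mathsf{X}(L_m))$ land at vertices $P_\infty(a_i a_j \cdots) \in F_d^0$ that previously received no such labels. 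Irreducibility and predecessor-separation are routine once the two-way connectivity is observed.

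The core of the argument is the identification $X_+ = \mathsf{X}(L_{d+m})$. For the inclusion $\mathsf{X}(L_{d+m}) \subseteq X_+$ I would exhibit a presentation of each generator: words in $L_m$ and $L_d$ follow the original paths in $(F_m, \mathcal{L}_m)$ and $(F_d, \mathcal{L}_d)$ respectively, while a bridging word $a_i w$ with $w \in L_m$ is presented by following the second type of connecting edge labelled $a_i$ from a border point of $F_d$ into a border point of $F_m$ and then the $(F_m, \mathcal{L}_m)$-path for $w$ ending at $P_0(L_m)$. Concatenations of generators exploit the first type of connecting edges whenever an $L_m^*$-word is completed at $P_0(L_m)$ and the next generator begins with some $a_i$. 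The reverse inclusion is the main obstacle. Given a biinfinite path $\lambda$ in $(F_+, \mathcal{L}_+)$, I would track the moments when $\lambda$ crosses between the $F_d$- and $F_m$-parts. Left-modularity of $L_m$ is the crucial tool: every path in $F_m$ terminating at $P_0(L_m)$ has label in $L_m^*$, forcing each crossing from $F_m$ to $F_d$ to occur precisely when an $L_m^*$-word completes. Every crossing from $F_d$ to $F_m$ uses a connecting edge labelled by some $a_i$ landing at a border point of $F_m$, after which the $F_m$-segment must eventually return to $P_0(L_m)$, spelling out $a_i w$ with $w \in L_m^*$. Combined with the decomposition of the $F_d$-segments into words of $L_d^*$, this yields a partitioning of $\mathcal{L}_+(\lambda)$ by generators of $L_{d+m}$.

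For the SFT conclusion, I would argue that each vertex of $(F_+, \mathcal{L}_+)$ is the predecessor set in $X_+$ of an intrinsically synchronising right-ray, so that the left Fischer cover coincides with the left Krieger cover of $X_+$ (cf.\ \cite[Lemma 5.46]{RJ_johansen_thesis}): $F_d$-vertices use synchronising right-rays inherited from the SFT $\RJXd{n_1,\ldots,n_k}$, and $F_m$-vertices use synchronising right-rays inherited from the SFT $\mathsf{X}(L_m)$; disjointness of the alphabets ensures these rays remain intrinsically synchronising in $X_+$. The hard part is the careful bookkeeping in the reverse inclusion, where one must verify that the very specific structure of the two types of connecting edges forces the label of any path through them to line up with a genuine $L_{d+m}$-partitioning.
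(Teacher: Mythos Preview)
Your treatment of the language equality $X_+=\mathsf{X}(L_{d+m})$ tracks the paper's proof closely and is essentially sound. One imprecision: left-modularity does not assert that \emph{every} path in $F_m$ ending at $P_0(L_m)$ has label in $L_m^*$, only those that also start at a border point; this is harmless in context since the connecting edges into $F_m$ land precisely at border points, so the $F_m$-segments you analyse do have the required form.

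The real gap is the SFT argument. Saying that each vertex of $(F_+,\mathcal{L}_+)$ is the predecessor set of an intrinsically synchronising right-ray is vacuous once you have already established that $(F_+,\mathcal{L}_+)$ is the left Fischer cover of $X_+$: Fischer-cover vertices are by definition predecessor sets of synchronising right-rays. This does not show that the Fischer cover coincides with the Krieger cover --- for that you would have to rule out \emph{extra} Krieger vertices arising from non-synchronising right-rays of $X_+$ --- and you have not explained why such a coincidence would force $X_+$ to be an SFT. The paper instead proves SFT-ness directly by showing that the covering map of $(F_{d+m},\mathcal{L}_{d+m})$ is injective. Whenever $x\in X_+$ contains a transition $a_i\alpha$ or $\alpha a_i$ with $\alpha$ in the $L_m$-alphabet, the vertex at that position is forced (to $P_\infty(a_ia_j\cdots)$, respectively to the unique $F_m$-source of an edge labelled $\alpha$ into $P_0(L_m)$); left-resolvingness then pins down the entire presentation to the left of that position. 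If such alphabet transitions occur arbitrarily far to the right the whole presentation is unique, and otherwise any presentation of $x$ is eventually trapped in $F_d$ or in $F_m$, where injectivity is inherited from the fact that $\RJXd{n_1,\ldots,n_k}$ and $\mathsf{X}(L_m)$ are themselves SFTs. This argument is short and uses exactly the alphabet-disjointness you already isolated.
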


\begin{proof}
 Let $(F_{d+m}, \mathcal{L}_{d+m})$ be the labelled graph defined in the lemma
 and sketched in Fig.\ \ref{RJ_fig_addition_Xd}.
The graph is left--resolving, predecessor--separated, and irreducible by construction, so it is the left Fischer cover of some sofic shift $X$ \cite[Cor. 3.3.19]{RJ_lind_marcus}. The first goal is to prove that $X = \mathsf{X}(L_{d+m})$.
By the arguments used in the proof of Lemma \ref{RJ_lem_Xd}, any word of the form $a_{i_0}w_m a_{i_1} a_{i_2}^{l_i} \ldots a_{i_p}^{l_p}$ where 
$w_m \in L_m^*$,
$p \in \mathbb{N}$, 
$i_j \neq i_{j+1}$ and $l_j < n_{i_j}$ for $1 < j < p$,
and $1 \leq l_p < n_{i_p} -1$
has a partitioning with empty beginning and end in $\mathsf{X}(L_{d+m})$. 
Hence, $\mathcal{B}(\mathsf{X}(L_{d+m}))$ is the set of factors of concatenations of words from
$
\left\{ w_m a_i w_d \mid w_m \in L_m^*, 1 \leq i \leq k, w_d \in \mathcal{B}(\RJXd{n_1, \ldots, n_k}), \RJleftl(w_d) \neq a_i \right \}
$.
Since $L_m$ is left--modular, a path $\lambda \in F_m^*$ with $r(\lambda) = P_0(L_m)$ has $\mathcal{L}_m(\lambda) \in L_m^*$ if and only if $s(\lambda)$ is a border point in $F_m$.
Hence, the language recognised by the left Fischer cover $(F_{d+m}, \mathcal{L}_{d+m})$ is precisely the language of $\mathsf{X}(L_{d+m})$.
 
It remains to show that $(F_{d+m}, \mathcal{L}_{d+m})$ presents an SFT. 
Let $1 \leq i \leq k$ and let $\alpha \in \mathcal{B}(\mathsf{X}(L_m))$, then any labelled path in $(F_{d+m}, \mathcal{L}_{d+m})$ with $a_i \alpha$ as a prefix must start at $P_\infty(a_ia_j\cdots)$.
Similarly, if there is a path $\lambda \in F_{d+m}^*$ with $\alpha a_i$ as a prefix of $\mathcal{L}_{d+m}(\lambda)$, then there must be unique vertex $v$ emitting an edge labelled $\alpha$ to $P_0(L)$, and $s(\lambda) = v$.
Let $x \in \mathsf{X}_{(F_{d+m},\mathcal{L}_{d+m})}$. If there is no upper bound on set of $i \in \mathbb{Z}$ such that $x_i \in \{a_1, \ldots, a_k\}$ and $x_{i+1} \in \mathcal{A}(\mathsf{X}(L_m))$ or vice versa, then the arguments above and the fact that the graph is left--resolving prove that there is only one path in $(F_{d+m},\mathcal{L}_{d+m})$ labelled $x$. If there is an upper bound on the set considered above, then a presentation of $x$ is eventually contained in either $F_{d}$ or $F_{m}$. It follows that the covering map of $(F_{d+m}, \mathcal{L}_{d+m})$ is injective, so it presents an SFT.
\end{proof}


\begin{example}
\label{RJ_ex_pos_det+nc_bf}
The next step is to use Lemma \ref{RJ_lem_addition_Xd} to construct renewal systems that share features with both $\RJXd{n_1, \dots, n_k}$ and the renewal systems considered in Example \ref{RJ_ex_pos_det}.
Given $n_1, \ldots , n_k \geq 2$ with $\max_j n_j > 2$, consider the list $L_d$ defined in (\ref{RJ_eq_Ld}) which generates the renewal system $\RJXd{n_1, \ldots, n_k}$,
and the list $L$ from (\ref{RJ_eq_pos_det}).
$L$ is left--modular, and $\mathsf{X}(L)$ is an SFT, so Lemma \ref{RJ_lem_addition_Xd} can be used to find the left Fischer cover of the SFT renewal system $X_+$ generated by
$
L_{+} = L_d \cup L \cup_{i=1}^k \{ a_i w \mid w \in L \}
$,
and the corresponding symbolic adjacency matrix is
\begin{displaymath}
A_+ =
\left( \!\!\! \begin{array}{c | c c c c | c c c c c | c | c c c c c  }
       b & \alpha    & 0 & b +\beta & a+\alpha                        &  b & 0 &\! \cdots \!& 0 & 0 & \!\cdots\! & b & 0  &\! \cdots \!& 0 & 0 
       \\
       \hline
       0            &   0      & \gamma &  0             & 0                 & 0 & 0           &\! \cdots \!& 0 & 0 &   & 0 & 0 &\! \cdots \!& 0 & 0      \\
       \beta      &   0      & 0            &  0             & \beta           & \beta & 0     &\! \cdots \!& 0 & 0 &  & \beta & 0 &\! \cdots \!& 0 & 0              \\
       0            &   0      & 0            &  0             & \tilde \alpha & 0 & 0           &\! \cdots \!& 0 & 0 &  & 0 & 0 &\! \cdots \!& 0 & 0      \\
       \gamma &   0      & 0            &  \gamma  & \gamma      & \gamma & 0 &\! \cdots \!& 0 & 0 &  & \gamma & 0 &\! \cdots \!& 0 & 0   \\
       \hline                          
       a_1     & 0 & 0 & a_1 & a_1 & 0     & 0  &  & 0 & 0 &  & a_1  & a_1 &\! \cdots \!& a_1 & a_1   \\       
       0         & 0 & 0 & 0     & 0     & a_1 & 0  &  & 0 & 0 &  & 0      & 0     &  & 0     & 0   \\ 
       \vdots & \vdots   & \vdots  & \vdots   & \vdots        &        &     &\! \ddots \!&    &    &  &         &        &\! \ddots \!&        &      \\          
       0         & 0 & 0 & 0     & 0     & 0     & 0  &  & 0 & 0 &  & 0      & 0     &  & 0     & 0   \\ 
       0         & 0 & 0 & 0     & 0     & 0     & 0  &  & a_1 & 0 &  & 0      & 0     &  & 0     & 0   \\
       \hline 
       \vdots &    &    &        &        &        &     &              &        &        & \!\ddots\!  &         &        &  &        &      \\                 
       \hline
       a_k     & 0 & 0 & a_k & a_k & a_k & a_k&\! \cdots \!& a_k & a_k &  & 0      & 0     & & 0     & 0   \\       
       0         & 0 & 0 & 0     & 0     & 0     & 0    &  & 0     & 0     &  & a_k      & 0     & & 0     & 0   \\ 
       \vdots & \vdots   & \vdots  & \vdots   & \vdots     &        &       &\! \ddots \!&        &        &  &         &        &\! \ddots \!&        &      \\          
       0         & 0 & 0 & 0     & 0     & 0     & 0    &  & 0     & 0     &  & 0      & 0     & & 0     & 0   \\ 
       0         & 0 & 0 & 0     & 0     & 0     & 0    &  & 0 & 0     &  & 0      & 0     & & a_k     & 0   \\
\end{array} \!\!\! \right) \; ,
\end{displaymath}
where $b = a + \alpha + \tilde \alpha$. Let $Y_+$ be a renewal system obtained from $X_+$ by a fragmentation of $a$, $\alpha$, $\tilde \alpha$, $\beta$, and $\gamma$. Then the (non--symbolic) adjacency matrix of the left Fischer cover of $Y_+$ is obtained from the matrix $A_+$ above by replacing $a_1, \ldots, a_k$ by $1$, and replacing $a$, $\alpha$, $\tilde \alpha$, $\beta$, and $\gamma$ by positive integers. Let $B_+$ be a matrix obtained in this manner. By doing row and column operations as in the construction that leads to the proof Proposition \ref{RJ_prop_non-cyclic_BF}, and by disregarding rows and columns where the only non--zero entry is a diagonal $1$, it follows that
\begin{displaymath}
\RJId - B_+ \rightsquigarrow 
\left( \!\!\! \begin{array}{c | c c c c | c c c c  }
       1-b            & -\alpha & 0            & -b -\beta   & -a-\alpha     &  -b          & -b             &\! \cdots \! & -b \\
       \hline
       0               & 1          & -\gamma &  0             & 0                 & 0            & 0              &\! \cdots \! & 0  \\  
       -\beta         & 0          & 1             &  0            & -\beta          & -\beta     & -\beta      &\! \cdots \! & -\beta  \\
       0              & 0          & 0              &  1             & -\tilde \alpha & 0            & 0            &\! \cdots \! & 0 \\
       -\gamma  & 0          & 0              & - \gamma & 1-\gamma     & -\gamma & -\gamma &\! \cdots \! & -\gamma \\
       \hline                          
       -1              & 0         & 0            & -1              & -1                 & 1            & 1-n_2     & \! \cdots \! & 1-n_k \\  
       -1              & 0         & 0            & -1              & -1                 & 1-n_1     & 1            &                  & 1-n_k \\
        \vdots      & \vdots & \vdots    & \vdots       & \vdots          &  \vdots    &               & \! \ddots \! & \vdots \\
        -1             & 0         & 0            & -1              & -1                 & 1-n_1     & 1-n_2     & \! \cdots \! & 1 \\           
\end{array} \!\!\! \right) \;  .
\end{displaymath}
Add the third row to the first and subtract the first column from columns $4, \ldots , k+4$ as in the proof of Lemma \ref{RJ_lem_entropy_cyclic} 
and choose the variables $a$, $\alpha$, $\tilde \alpha$, $\beta$, and $\gamma$ as in the proof of Theorem \ref{RJ_thm_rs_det_range}.  Assuming that $n_i | n_{i-1}$ for $2 \leq i \leq k$, this matrix can be  reduced to 
\begin{multline*}
\RJId - B_+ \rightsquigarrow \\
\left( \begin{array}{c | c c c c c }
x         & -1       & -1     & -1     & \cdots & -1 \\
\hline
2x-1    &  0       & -n_2 & -n_3 & \cdots & -n_k \\
0         & -n_1   &  n_2 &  0     &            & 0  \\
0         & -n_1   &  0     & n_3  &            & 0  \\
\vdots & \vdots &        &         & \ddots  & \vdots  \\
0        & -n_1     &  0    & 0      & \cdots  & n_k  \\ 
\end{array}  \right)
\rightsquigarrow 
\left( \begin{array}{c c c c c c }
x         & -\sum_{i=1}^k \frac{n_1}{n_i} & -1     & 0      & \cdots & 0 \\
2x-1    &  -(k-1)n_1                               & 0      &  0     & \cdots & 0 \\
0         &  0                                            &  n_2 &  0     &            & 0  \\
0         &  0                                            &  0     & n_3  &            & 0  \\
\vdots & \vdots                                     &         &         & \ddots  & \vdots   \\
0        &    0                                           &  0    & 0      &  \cdots  & n_k  \\ 
\end{array}  \right) \; ,
\end{multline*}
where $x \in \mathbb{Z}$ is arbitrary. Hence, the determinant is 
\begin{equation}
\label{RJ_eq_pos_det+nc_bf_det}
  \det(\RJId - B_+) =  n_2 \cdots n_k  \left( (2x-1)\sum_{i=1}^k \frac{n_1}{n_i}   -  x(k-1)n_1 \right) \; ,
  \end{equation}
and there exists an abelian group $G$ with at most two generators such that the Bowen--Franks group of the corresponding SFT is $G \oplus \mathbb{Z} / n_3 \mathbb{Z} \oplus \cdots \oplus \mathbb{Z} / n_k \mathbb{Z}$.
For $x = 0$, the determinant is negative and the Bowen--Franks group is $\mathbb{Z} \big/ \big(\sum_{i=1}^k \frac{n_2 n_1}{n_i} \big)\mathbb{Z} \oplus \mathbb{Z} /n_3\mathbb{Z} \oplus \cdots \oplus \mathbb{Z} /n_k\mathbb{Z}$.
\end{example}

This gives the first example of SFT renewal systems that simultaneously have positive determinants and non--cyclic Bowen--Franks groups.

\begin{theorem}\label{RJ_thm_nc_bf_det}
Given $n_1, \ldots, n_k \geq 2$ with $n_i | n_{i-1}$ for $2 \leq i \leq k$ there exist abelian groups $G_\pm$ with at most two generators and SFT renewal systems $\mathsf{X}(L_\pm)$ such that $\RJBF_+(\mathsf{X}(L_\pm)) = \pm G_\pm \oplus \mathbb{Z} /n_1\mathbb{Z} \oplus \cdots \oplus \mathbb{Z} /n_k \mathbb{Z}$.
\end{theorem}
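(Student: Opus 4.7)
The plan is to specialize the construction of Example~\ref{RJ_ex_pos_det+nc_bf}. Given $n_1, \ldots, n_k$ as in the statement, I choose auxiliary integers $m_1, m_2$ satisfying $m_2 \mid m_1$, $n_1 \mid m_2$, and $\max(m_1, m_2, n_1, \ldots, n_k) > 2$, and apply the construction of that Example to the $(k+2)$-term sequence $(m_1, m_2, n_1, \ldots, n_k)$. The complete divisibility chain $n_k \mid n_{k-1} \mid \cdots \mid n_1 \mid m_2 \mid m_1$ is precisely what the Smith-form reduction in the Example needs, so its conclusion applies verbatim: the Bowen--Franks group of the resulting SFT renewal system is $G \oplus \mathbb{Z}/n_1\mathbb{Z} \oplus \cdots \oplus \mathbb{Z}/n_k\mathbb{Z}$, where $G$ arises as the cokernel of a $2 \times 2$ integer matrix and hence has at most two generators.

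To realize both signs, substitute these parameters into formula~(\ref{RJ_eq_pos_det+nc_bf_det}), obtaining
\[
\det(\RJId - B_+) = m_1 m_2 n_1 \cdots n_k \bigl( x\bigl(2S' - (k+1)\bigr) - S'\bigr),
\]
where $S' = 1/m_1 + 1/m_2 + \sum_{i=1}^k 1/n_i$ and $x \in \mathbb{Z}$ is the free determinant parameter of the interior $\mathsf{X}(L)$-block, which ranges over all of $\mathbb{Z}$ by Theorem~\ref{RJ_thm_rs_det_range}. Taking $x = 0$ gives $\det = -m_1 m_2 n_1 \cdots n_k \, S' < 0$, producing $\mathsf{X}(L_-)$. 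For $\mathsf{X}(L_+)$, note that $\sum_{i=1}^k 1/n_i \le k/2$ since $n_i \ge 2$, so $2S' - (k+1) \le 2/m_1 + 2/m_2 - 1$; choosing $m_1, m_2$ sufficiently large while respecting divisibility (for instance $m_1 = m_2 = N n_1$ for $N$ large enough) makes this coefficient strictly negative, and then any sufficiently negative $x$ yields $\det > 0$.

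The main obstacle is attaining both signs while preserving the prescribed cyclic factors $\mathbb{Z}/n_i\mathbb{Z}$ in the Bowen--Franks group. This is handled by the block structure of the Smith normal form produced in the Example: all variation in $x$, $m_1$, and $m_2$ is absorbed into the $2 \times 2$ block responsible for $G_\pm$, while the divisibility hypothesis $n_i \mid n_{i-1}$ guarantees that the remaining elementary divisors stay $n_1, \ldots, n_k$ in precisely that order. The groups $G_+$ and $G_-$ are then read off as the cokernels of the respective $2 \times 2$ blocks for the chosen values of $(m_1, m_2, x)$.
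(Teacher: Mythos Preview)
Your proposal is correct and follows essentially the same route as the paper: invoke the construction of Example~\ref{RJ_ex_pos_det+nc_bf} on a divisibility chain extending the given $n_1,\ldots,n_k$, and then vary the free parameter $x$ from the $\mathsf{X}(L)$-block to realise both signs of the determinant. The paper's proof is extremely terse---it simply says to use the example and choose $x$ appropriately---so it leaves implicit exactly the two points you spell out: (i) the index shift, i.e.\ that the example applied to $(N_1,\ldots,N_K)$ produces cyclic factors only from $N_3$ onward, so one must prepend two extra terms $m_1,m_2$ to recover $\mathbb{Z}/n_1\mathbb{Z}\oplus\cdots\oplus\mathbb{Z}/n_k\mathbb{Z}$; and (ii) the verification that the coefficient of $x$ in~\eqref{RJ_eq_pos_det+nc_bf_det} can be made nonzero (indeed strictly negative) by taking $m_1,m_2$ large, which is needed since for some bare sequences (e.g.\ $n_1=n_2=n_3=3$) that coefficient vanishes. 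Your explicit treatment of both points is an improvement in rigour over the paper's one-line argument, but the underlying idea is the same.
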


\begin{proof}
Consider the renewal system from Example \ref{RJ_ex_pos_det+nc_bf}. Given the other variables, (\ref{RJ_eq_pos_det+nc_bf_det}) shows that $x$ can be chosen such that the determinant has either sign.
\end{proof}

The question raised by Adler, and the related question concerning the flow equivalence of renewal systems are still unanswered, and a significant amount of work remains before they can be solved.
However, there is hope that the techniques developed in Sec. \ref{RJ_sec_rs_lfc} and the special classes of renewal systems considered in Sec. \ref{RJ_sec_rs_range} can act as a foundation for the construction of a class of renewal systems attaining all the values of the Bowen--Franks invariant realised by irreducible SFTs.




\end{document}